\newtheorem{teo}{Theorem}[section]
\newtheorem{dfn}[teo]{Definition}
\newtheorem{cor}[teo]{Corollary}
\newtheorem{prop}[teo]{Proposition}
\newtheorem{remark}[teo]{Remark}
\numberwithin{equation}{section}
\renewcommand{\Re}{\operatorname{Re}\,}
\newcommand{\HH}{\mathcal H}
\newcommand{\E}{\mathcal E}
\newcommand{\DD}{\mathcal D}
\newcommand{\X}{\mathcal X}
\newcommand{\Z}{\mathcal Z}
\newcommand{\n}{\noindent}
\newcommand{\ve}{\varepsilon}
\newcommand{\erre}{\mathbb{R}}
\newcommand{\al}{\alpha}
\newcommand{\ga}{\gamma}
\newcommand{\Ga}{\Gamma}
\newcommand{\la}{\lambda}
\newcommand{\f}{\frac}
\newcommand{\ba}{\begin{eqnarray}} \newcommand{\ea}{\end{eqnarray}}
\newcommand{\be}{\begin{equation}} \newcommand{\ee}{\end{equation}}
\newcommand{\bdm}{\begin{displaymath}} \newcommand{\edm}{\end{displaymath}} 
\newcommand{\brr}{\begin{array}}\newcommand{\err}{\end{array}}
\newcommand{\lf}{\left}
\newcommand{\ri}{\right}
\newcommand{\bml}{\begin{gather}} 
\newcommand{\eml}{\end{gather}}
\DeclareMathOperator{\Lip}{Lip}
\newcommand{\ran}{\rangle}
\newcommand{\lan}{\langle}
\newcommand{\beq}{\begin{equation}}
\newcommand{\eeq}{\end{equation}}
\newcommand{\RE}{\mathbb{R}}
\def\CO{{\mathbb C}}
\renewcommand{\leq}{\leqslant}
\renewcommand{\geq}{\geqslant}
\renewcommand{\epsilon}{\varepsilon}
\newcommand{\fila}{\phi^{\la}}
\renewcommand{\d}{{\bf d}}
\newcommand{\x}{{\bf x}}
\newcommand{\y}{{\bf y}}
\renewcommand{\k}{{\bf k}}
\newcommand{\0}{{\bf 0}}
\title[]{Well posedness of the nonlinear Schr\"odinger equation\\ with isolated singularities}
\author[]{Claudio Cacciapuoti}
\address{Dipartimento di Scienza e Alta Tecnologia, Universit\`a dell'Insubria, Via Valleggio 11, 22100 Como, Italy}
 \email{claudio.cacciapuoti@uninsubria.it}
\author[]{Domenico Finco}
\address{Facolt\`a di Ingegneria, Universit\`a Telematica
Internazionale Uninettuno,  Corso Vittorio Emanuele II 39, 00186 Roma, Italy}
\email{d.finco@uninettunouniversity.net}
\author[]{Diego Noja}
\address{Dipartimento di Matematica e Applicazioni, Universit\`a
 di Milano Bicocca,  via Roberto Cozzi 55, 20126 Milano, Italy}
\email{diego.noja@unimib.it}
\begin{document}

\begin{abstract} We study the well posedness of the nonlinear Schr\"odinger (NLS) equation with a point interaction and power nonlinearity in dimension two and three. Behind the autonomous interest of the problem, this is a model of the evolution of so called singular solutions that are well known in the analysis of semilinear elliptic equations. We show that the Cauchy problem for the NLS considered enjoys local existence and uniqueness of strong (operator domain) solutions, and that the solutions depend continuously from initial data. In dimension two well posedness holds for any power nonlinearity and global existence is proved for powers below the cubic. In dimension three local and global well posedness are restricted to low powers. 
\end{abstract}

\maketitle

\begin{footnotesize}
 \emph{Keywords:} Non-linear Schr\"odinger equation; Singular solutions; Point interactions.
 
 \emph{MSC 2020:} 35J10, 35Q55, 35A21.
 \end{footnotesize}



\section{Introduction}

In the present paper we want to define the dynamics of the nonlinear Schr\"odinger equation (or briefly NLS equation) in the presence of isolated singularities.\\ By isolated singularities we mean the solutions of the equation
\begin{equation} \label{cauchy0}
\left\{ 
\begin{aligned}
i \partial_t\psi   &= - \Delta \psi  \pm|\psi|^{p-1}\psi \\
 \psi (0)  &=   \psi_0 
 \end{aligned} 
 \right.
\end{equation}
where $\psi\in H^2(\RE^n\setminus\{\0\}).$ The sign in front of the nonlinearity will be not important in the sequel.
Our main result is the local and global well posedness in dimension $n=2$ or $n=3$ when the nature of admitted singularities is suitably restricted. To make more clear the premises of our analysis we describe the analogous and well known problem in the time independent case.
Isolated singularities of semi linear elliptic equations constitute a subject of study flourished at the end of '70s and still prolific, with important ramifications toward quasilinear elliptic and parabolic equations (see for an incomplete but representative bibliography \cite{BreLio81, GKS,JPY94, Lions80, NS1_86, NS2_86, Veron81, Veron96} and references therein). An example especially relevant in our context is given by the stationary equation associated to \eqref{cauchy0}:
\n
\beq\label{nodelta}
-\Delta u  \pm |u|^{p-1} u - \omega u =0 .\ \ \ \ \ \ \ 
\eeq
\noindent
Its positive solutions defined and regular on $\RE^n\setminus\{\0\} $ and vanishing at infinity are called ground states or singular ground states according to their behavior at $\0$, respectively bounded or diverging. 
Consider for example the equation with the minus sign in \eqref{cauchy0} or \eqref{nodelta}, the so called focusing stationary NLS equation. A typical result is the following. Let $1<p<\frac{n}{n-2}$ ($p>1$ if $n=2$); for any singular ground state $u$ of \eqref{nodelta}  
there exist $q\geq 0$ depending on $u$ such that 
\begin{equation*}
\lim_{\x\to \0} u(\x){|\x|}^{{n-2}}=q \ \ \ \ \ (n\geq 3)\ \ \ \text{or} \ \ \ \lim_{\x\to \0} u(\x)\frac{1}{\log{\frac{1}{|\x|}}}=q \ \ \ \ \ (n=2).
\end{equation*}
Moreover  $u$ solves as a distribution
\begin{equation*}
-\Delta u - |u|^{p-1}u -\omega u = c_n q \delta_\0
\end{equation*}
\vskip3pt
\noindent
for some positive constant $c_n$ depending on the dimension only and that can be absorbed in the singularity. So the only singularities admitted have the behavior of the fundamental solution of the Laplacian $G^0$. 
For  $\frac{n}{n-2}<p<\frac{n+2}{n-2}$ the singular ground state still exists but with a different power type singularity $u_p$ depending on $p$, and for $p\geq\frac{n+2}{n-2}$ equation \eqref{nodelta} has neither a ground state nor a singular ground state (see \cite{NS1_86, NS2_86}). Results for the defocusing (plus sign in \eqref{cauchy0} or \eqref{nodelta}) again show an alternative between a ground state with the singularity of the fundamental solution of the Laplacian $G^0$, a different singularity $u_p$ and finally no ground states at all. The difference with respect to the focusing case is that the two different singularities can coexist in the range $\frac{n}{n-2}<p<\frac{n+2}{n-2}$. We refer to the already quoted references and the detailed monograph \cite{Veron96} for a complete analysis. In the present work we want to study the time dependent NLS equation in the regime in which the singularities around the origin are of type of the fundamental solution $G^0$. It is by no means an obvious fact that this behavior, if initially present, could be preserved by the NLS flow for some range of nonlinearities. To analyze this problem and to achieve our main result, we choose to work in a Hilbert space setting. It turns out that a convenient way to rewrite the equation in \eqref{cauchy0} is as the abstract NLS equation 
\[
i \partial_t\psi   =  \HH_{\al} \psi  \pm|\psi|^{p-1}\psi
\]
where $\HH_\al$ is a self-adjoint operator in $L^2(\RE^n)$. The operator $\HH_{\al} $ is well defined only for $n\leq3$ and it belongs to the class of point interactions (see \cite{Albeverio} and references therein, and Section \ref{ss:PI} below for  the essential facts). This well known class of operators is the most suitable linear part for the abstract NLS equation because elements of the domain $\psi\in \DD(\HH_{\al})$ behave at the origin exactly as the fundamental solution, and they have $H^2$ Sobolev regularity away from the origin. Namely, every element in $\DD(\HH_{\al})$ has the structure
$$
\psi=\phi^\la+qG^\la \qquad (\lambda > 0 )
$$
where $G^\la$ is the Green function of the Laplacian, $(-\Delta+\lambda)G^\la=\delta_\0$, and $\phi^\la\in H^2(\RE^n)$.\\
Moreover a precise relations between the complex coefficient $q$ and the regular part  $\phi^\la\in H^2$ is needed to have a self-adjoint operator with domain $\DD(\HH_{\al})$. In this boundary condition appears a real parameter $\al$ and for any such $\alpha$ one has a different self-adjoint operator $\HH_\al$. Complete definitions will be given in Section \ref{ss:PI}. These facts suggest to treat the nonlinear term as a perturbation in the linear Schr\"odinger dynamics generated by $\HH_{\al}$ and to search for strong solutions of the NLS equation, i.e. solutions of \eqref{cauchy0} with initial data in $\DD(\HH_{\al}).$ The above considerations lead eventually to the Cauchy problem 
\begin{equation} \label{cauchy1}
\left\{ \begin{aligned}
i \partial_t\psi   &=  {\HH_{\al}} \psi   \pm|\psi|^{p-1}\psi \\
 \psi (0)  &=   \psi_0\in \DD(\HH_{\al}) \end{aligned} \right.
\end{equation}
We are not aware of any result about this evolution problem in dimension $n>1$. On the contrary, some rigorous literature exists in the much simpler one dimensional case, where the operator $\HH_\al$ can be interpreted, at least formally, as a Schr\"odinger operator with a delta potential (see \cite{AN09} and the treatment in the more general context of quantum graphs given in \cite{CFN17}). We also mention the paper \cite{MOS18}, where the Cauchy problem with Hartree nonlinearity is treated.
Preliminary to the analysis of well posedness of \eqref{cauchy1} is the construction of some essential technical tools. Namely one has to extend classical interpolation inequalities to a scale of spaces modeled on $\DD(\HH_{\al})$ (so including singular behavior), and to prove dispersive and Strichartz estimates. These properties, for the most part new, are discussed and proved in Section \ref{s:Prel}. The local well posedness is the content of Theorem \ref{local}, the proof of which fills Section \ref{s:wp}, including local existence, unconditional uniqueness, continuous dependence and blow-up alternative. While the proof of the well posedness is insensitive to the actual value of $\al$ (which by this reason and by notational simplicity will be discarded after Section \ref{ss:PI}), the result provides families of singular solutions parametrized by the real $\alpha$. More detailed analysis of the dynamics can of course depend on it. The proof exploits the framework introduced by Kato (see \cite{K, K2, K3}) to study the standard case of the Laplacian, but with different conclusions. In particular, we stress that the two and three dimensional cases display a rather different behavior with respect to the nonlinearity power. The good news is that, as in the standard case of the Laplacian, local well posedness holds for any power in the two dimensional case. On the contrary, in the three dimensional case the admitted powers  are greatly restricted, being in the range $p\in (1,3/2)$, which means a rather mild nonlinearity. We recall that strong $H^2$ solutions exist in the standard case for any power $p$. It seems not possibile to overcome this limitation in the present framework: powers of the logarithm are tame, inverse powers of $|\x|$ are not. We also notice that a consequence of the well posedness for problem \eqref{cauchy1}  is that its solutions solve in distributional sense  the equation 
\begin{equation*}
i \partial_t\psi   =  -\Delta \psi   \pm|\psi|^{p-1}\psi - q\delta_\0\ . \\
\end{equation*}
i.e. a NLS equation with a time dependent delta source (see Remark \ref{distributional}). This fact gives a further and suggestive interpretation of both \eqref{cauchy1} and the kind of evolution of singular solutions.
 In Section \ref{s:gwp} global well posedness of the dynamics is treated. Again, for the two dimensional case nothing changes with respect to the standard case of the Laplacian: also in the presence of singular solutions, global existence is guaranteed for nonlinearities $p<3$ for any initial datum. On the other hand, in the three dimensional case global existence holds for the same powers in which local existence is true, $p<3/2$. \\
We conclude this introduction describing possible developments and perspectives raised by the present results, not reduced to the always possible extensions and technical refinements of the results (among the latter we include for example well posedness in $L^2$ or in the form domain of the operator). The problem of singular solutions has an autonomous mathematical interest; however, as recorded in the references cited above, a not secondary physical motivation for their study originated in models of condensed matter, in particular stationary Fermi-Thomas theory and examples of Yang-Mills theories. Due to the considered models it was natural to limit the analysis to the elliptic stationary case. However, we notice that one could be interested in the dynamics of Bose-Einstein condensates in the presence of defects, conveniently modeled as point interactions. In this case the relevant equation is the time dependent one. In particular vortex solutions could fall within this description. A first step is the analysis of existence of stationary solutions and their stability. The first, and in fact unique, result about existence in which it was made use of point interactions appeared in  {\cite{CaCle93} and \cite{CaCle94}. In those papers a branch of singular ground states bifurcating from the linear eigenstate of $\HH$ was proved for the 3D stationary defocusing case in equation \eqref{nodelta}. A more complete classification of standing waves would be desirable, firstly as regards ground states, then including excited (non sign-definite) states and eventually considering the interactions of several singularities (not treated in this paper). Stability of the solutions with respect of the NLS flow is then a natural question. In this respect, we also mention on the physical and modeling side the recent contributions in \cite{Sakaguchi20, Malomed20} that seem to have raised an interest on ``singular solitons''.  A second relevant question concerns a detailed analysis of the evolution for the critical ($p=3$) and supercritical ($p>3$) nonlinearities in the $n=2$ case. One expects blow-up and it would be interesting to understand if and how the singularity plays a role. Finally, scattering theory for the NLS in the presence of point defects is also a relevant issue. About all these problems there is no previous analysis, and they seem to deserve some interest.

\section{Preliminaries \label{s:Prel}}

In this section we fix notations and we prove some technical results used in the following.\\
We denote by $\x$, $\k$ and so on, points in $\RE^n$, $n=2,3$. Correspondingly, we use the notation $x\equiv  |\x|$, $k \equiv |\k|$. \\ 
We denote by $\hat f$ the Fourier transform of $f$, defined to be unitary in $L^2(\RE^n)$: 
\[
\hat f(\k) := \frac1{(2\pi)^{n/2}} \int_{\RE^n} \d\x\, e^{-i\k\cdot\x} f(\x) \qquad \k \in \RE^n. 
\]
We denote by $\|\cdot\|$ the $L^2(\RE^n)$-norm associated with the inner product $\langle\cdot, \cdot\rangle$ and with $\|\cdot\|_p$ the $L^p(\RE^n)$-norm while we use $\| \cdot \|_{H^s}$ for the norm in the Sobolev spaces $H^s( \RE^n)$, $s\in\RE$.\\ As customary, we denote with the same symbol $\langle\cdot, \cdot\rangle$ the duality between Banach spaces or evaluation of distributions.
We use sometimes Dirac notation, that is $|u\rangle \langle v |$ stands for the 1-rank operator $f \leadsto u \langle v,f\rangle$.

For all $\lambda>0$ we denote by $G^\lambda$ the $L^2$ solution of the distributional equation $(-\Delta +\lambda )G^\lambda = \delta_{\0}$, where $\delta_{\0}$ is the Dirac-delta distribution centered in $\x=\0$.  Hence, the integral kernel of the resolvent  of the Laplacian is given by 
\[
G^\lambda( \x- \y ) = (-\Delta +\la )^{-1} (\x-\y) \qquad \la\in \RE^+ .
\]
Explicitly we have: 
\[G^\lambda ( \x ) =\left\{ \begin{aligned}
& \frac{1}{2\pi} K_0 (\sqrt{\la}\,x) \qquad & n=2; \\ 
& \frac{ e^{-\sqrt{\la} \, x}}{4\pi x } & n=3. 
\end{aligned}\right.
\]
Here  $K_0$ is the Macdonald function of order zero. We recall the relation $\frac{1}{2\pi} K_0(\sqrt\lambda \, x) = \frac{i}{4} H_0^{(1)}(i\sqrt\lambda \, x)$, where $H_0^{(1)}(z)$ is the Hankel function of first kind and order zero (also known as zeroth Bessel function of the third kind), see, e.g., \cite{was} Eq. (8) p. 78).

We use $c$ and $C$ to denote generic positive constants whose  dependence on the parameters of the problem is irrelevant, their value may change from line to line. 

\subsection{Point interactions\label{ss:PI}} We denote by $\HH_\alpha$  the  self-adjoint operator in $L^2 (\RE^n)$, $n=2,3$, given by the Laplacian with a delta interaction of ``strength'' $\alpha$ placed in the origin. 

We recall that, see \cite{Albeverio}, both for $n=2$ and $n=3$ the structure of the domain of $\HH_\alpha$ is the same: 
\begin{equation}\label{DD}
D (\HH_\alpha) = \left\{ \psi \in L^2 (\RE^n)|\;\psi  = \phi^\la   +q \,  G^\lambda ,\, \phi^\la \in H^2(\RE^n), \;\; q=\Lambda^\lambda_\alpha \,  \phi^\la(\0)\ri\}
\end{equation}
with 
\[
 \Lambda^{\lambda}_\alpha= \left\{\begin{aligned}
& \f{2\pi}{2\pi \al +\ga +\ln(\sqrt{\la}/2) } \qquad  & n=2, \\
& \f{1}{ \al +\frac{\sqrt{\la}}{4\pi}  } & n=3 ;
\end{aligned} \right. \qquad \alpha \in\RE. 
\]
Here  $ \la $ can be taken in $\RE^+$ (possibly excluded one point which we denote by $-E_\alpha$, where $E_\alpha$ is the negative eigenvalue of $\HH_\alpha$, see below for the details).  For $n=2$,  $\gamma$ is the Euler-Mascheroni constant. The constant $\al$ is real and it parametrizes the family of operators through  $q=\Lambda^\lambda_\alpha \,  \phi^\la(\0)$, which plays the role of a boundary condition at the singularity. For both $n=2$ and $3$ the free dynamics is recovered in the limit $\alpha\to+\infty$. 
\\
The action of the operator  is given by 
\begin{equation}\label{action1}
(\HH_\alpha +\la)\psi = (-\Delta +\la )\phi^\la \qquad \forall\psi\in D(\HH_\alpha). 
\end{equation}
\n
The Hamiltonian $\HH_\alpha$ has $[0,\infty)$ as continuous spectrum  furthermore there is no singular continuous spectrum. For $n=2$, $\HH_\alpha$ has  a simple negative eigenvalue $\{ E_\al \}$ for any $ \al \in \RE$. For $n=3$,  if $\al\geq 0$ there is no point spectrum, while for $\al<0$ there is a simple negative eigenvalue $\{ E_\al \}$. Whenever the eigenvalue $E_\alpha$ exists, we denote by $\psi_\alpha$ the corresponding eigenvector. Explicitly one has:
\[
\begin{aligned}
& E_\al= -4 e^{-2(2\pi \al +\ga)} \qquad && \psi_\al (\x) =    \frac{1}{2\pi} K_0 (  2\, e^{-(2\pi \al +\ga)}x) \qquad && n=2; \\ 
& E_\al= -(4 \pi \al)^2   && \psi_\al (\x) =  \frac{ e^{4 \pi \al \, x}}{4\pi x},  \quad \alpha<0&&   n=3.
\end{aligned}
\]
The resolvent of $\HH_\alpha$ is given by the  abstract Kre\u{\i}n resolvent formula 
\begin{equation}\label{resolvent}
(\HH_\alpha+\lambda)^{-1}  = (-\Delta +\lambda)^{-1} +(\Lambda_\alpha^\lambda)^{-1} |G_\lambda\rangle \langle G_\lambda| \qquad \lambda \in \RE^+\backslash\{|E_\alpha|\}  
\end{equation}
($\lambda \in  \RE_+$ if  $n=3$ and $\alpha \geq 0$). 


For  $\la > |E_\al |$ (take $\la >0 $ if $n=3$ and $\al\geq 0$)  we  define 
\[
\DD_\alpha^s: = D\big( ( \HH_\alpha+\la)^s \big), \qquad s \in \RE
\]
where $( \HH_\alpha+\la)^s$ is a self-adjoint operator on $L^2(\RE^n)$ defined through functional calculus. We equip $\DD_\alpha^s$ with the norm  $\|\psi\|_{\DD^s_\alpha} : = \|( \HH_\alpha+\la)^s \psi\|$, equivalent to the graph norm of the operator $( \HH_\alpha+\la)^s $. Consistently we set
\begin{equation}\label{DDalpha}
\DD_\alpha \equiv \DD_{\alpha}^{s=1} = D(\HH_\alpha)
\end{equation}
defined in Eq. \eqref{DD}.  
\begin{remark}
Given a function $\psi\in\DD_\alpha$, we refer to $\phi^\lambda$ (defined in Eq. \eqref{DD}) as its regular part. By Eq. \eqref{action1},  $\|\psi\|_{\DD_\alpha} = \|( \HH_\alpha+\la) \psi\| =  \|(-\Delta +\lambda) \phi^\lambda\|$ which  is equivalent to $\|\phi^\la\|_{H^2}$. Since $\|\cdot\|_{\DD_\alpha}$ is also equivalent to the graph norm of $\HH_\alpha$, then $\|\cdot\|_{\DD_\alpha}$ is equivalent to the $H^2$-norm of the regular part. From now on we shall always use the notation \eqref{DDalpha} for the set $D(\HH_\alpha)$, and work with the norm $\| \cdot \|_{\DD_\alpha}$.
\end{remark}
From now on, to simplify the notation, and since $\alpha$ is regarded as a fixed parameter, we omit $\alpha$ from the notation for objects that may depend on it  and we simply write, for example,  $\HH\equiv \HH_\alpha$, $\Lambda \equiv \Lambda_\alpha$, and $\DD\equiv \DD_\alpha$.

\subsection{Embeddings and interpolation inequalities}

 We recall the  Sobolev embedding (see, e.g., \cite[Th. 2.8.1 $b)$ and $e)$, and Rem. 2 to the theorem]{Triebel}): 
\be \label{r:embedding}\begin{aligned}
& H^s(\RE^n) \hookrightarrow L^q(\RE^n) \qquad & 2\leq q <\infty,\;   s \geq s_c; \\ 
& H^s(\RE^n) \hookrightarrow C_B(\RE^n) & s>n/2;  
\end{aligned}
\ee
where $s_c= n(\frac12 -\frac1q)$ and  $C_B(\RE^n)$ denotes the space of bounded, continuous functions on $\RE^n$.  
We will need further embedding properties involving the domains of operators $\HH+\lambda$ and the domains of their fractional powers. Recall that, for all $\lambda >0$, 
\[\begin{aligned}
&G^\lambda  \in L^s(\RE^3) \qquad && 1\leq s <3; \\ 
& G^\lambda \in L^s(\RE^2)  && 1\leq s <\infty.
\end{aligned}
\]
Hence, by the definition of the operator domain $\DD$, see Eq. \eqref{DD}, it follows that 
\begin{equation}\label{DDalpha-embedding}
\DD \hookrightarrow L^{q}(\RE^n) \qquad \text{where $2\leq q <3$ if $n=3$, and $2\leq q <\infty $ if $n=2$.}
\end{equation}
A less obvious property is given in the following:
\begin{prop} \label{frazio}
We have $\DD^{s/2} \hookrightarrow H^{s}$ with continuous  embedding when:
\begin{align*}
 & 0<s<1 && \text{if }n=2;  \\
 &0<s<1/2 && \text{if }n=3. 
\end{align*}
\end{prop}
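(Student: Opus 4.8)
The plan is to reduce the statement to an operator bound on $L^2(\RE^n)$. Since $\|\psi\|_{H^s}$ is equivalent to $\|(-\Delta+\la)^{s/2}\psi\|$ and, by definition, $\|\psi\|_{\DD^{s/2}}=\|(\HH+\la)^{s/2}\psi\|$, the embedding $\DD^{s/2}\hookrightarrow H^s$ is, after the substitution $g=(\HH+\la)^{s/2}\psi$, equivalent to the $L^2$-boundedness of
\[
T_s:=(-\Delta+\la)^{s/2}(\HH+\la)^{-s/2}.
\]
As a sanity check that also locates the threshold, note that for $\psi=\phi^\la+qG^\la\in\DD^1$ one has $\psi\in H^s$ directly whenever $G^\la\in H^s$: the regular part lies in $H^2$, one has $|q|=|\Lambda\,\phi^\la(\0)|\lesssim\|\phi^\la\|_{H^2}$ by $H^2\hookrightarrow C_B$, and $\widehat{G^\la}$ decays like $k^{-2}$, so $G^\la\in H^s$ precisely for $s<2-n/2$. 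The content of the proposition is that this $H^s$-regularity survives on the strictly larger spaces $\DD^{s/2}\supset\DD^1$, so a genuine estimate on $T_s$, and not merely the pointwise decomposition \eqref{DD}, is needed.

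First I would expand $(\HH+\la)^{-s/2}$ through the Balakrishnan representation, legitimate since $\HH+\la>0$ for $\la>|E_\al|$,
\[
(\HH+\la)^{-s/2}=c_s\int_0^\infty\mu^{-s/2}(\HH+\la+\mu)^{-1}\,d\mu,\qquad c_s=\tfrac1\pi\sin(\tfrac{\pi s}{2}),
\]
and insert the Kre\u{\i}n formula \eqref{resolvent} with $\la$ replaced by $\la+\mu$. The free resolvent integrates back to $(-\Delta+\la)^{-s/2}$, leaving
\[
(\HH+\la)^{-s/2}=(-\Delta+\la)^{-s/2}+S_s,\qquad S_s=c_s\int_0^\infty\mu^{-s/2}\,\Lambda^{\la+\mu}\,|G^{\la+\mu}\rangle\langle G^{\la+\mu}|\,d\mu,
\]
where the rank-one coefficient is the boundary-condition constant $\Lambda^{\la+\mu}$ of \eqref{DD}; the only facts I need about it are the decay rates $\Lambda^{\la+\mu}=O((\la+\mu)^{-1/2})$ for $n=3$ and $\Lambda^{\la+\mu}=O(1/\log(\la+\mu))$ for $n=2$ as $\mu\to\infty$. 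Thus $T_s=I+R_s$ with $R_s=(-\Delta+\la)^{s/2}S_s$, and it remains to bound $R_s$.

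Passing to Fourier variables and using $\widehat{G^{\la+\mu}}(\k)=(2\pi)^{-n/2}(k^2+\la+\mu)^{-1}$, the operator $R_s$ has kernel
\[
K(\k,\k')=\frac{c_s}{(2\pi)^n}\,(k^2+\la)^{s/2}\int_0^\infty\frac{\mu^{-s/2}\,\Lambda^{\la+\mu}}{(k^2+\la+\mu)(k'^2+\la+\mu)}\,d\mu .
\]
Carrying out the $\mu$-integral by splitting at $\mu\sim k^2$ and $\mu\sim k'^2$ yields, with $a=k^2+\la$ and $b=k'^2+\la$, a bound of the form $K(\k,\k')\lesssim a^{s/2-1}b^{-1/2-s/2}$ when $a\ge b$ and $K(\k,\k')\lesssim a^{-1/2}b^{-1}$ when $a\le b$ for $n=3$ (with logarithmic analogues for $n=2$). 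I would then run a Schur test with power weights $w(\k)=(k^2+\la)^{-p}$: both the row and the column estimate reduce to requiring $p$ to lie in the interval with endpoints $\tfrac{s}{2}+\tfrac{n-2}{2}$ and $1-\tfrac{s}{2}$, and this interval is nonempty exactly when $s<2-\tfrac n2$, that is $s<1/2$ for $n=3$ and $s<1$ for $n=2$, reproducing the stated ranges.

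The hard part is precisely this last estimate. The naive route, bounding $\|S_s\|$ by Minkowski's inequality through $\int_0^\infty\mu^{-s/2}\Lambda^{\la+\mu}\,\|(-\Delta+\la)^{s/2}G^{\la+\mu}\|\,\|G^{\la+\mu}\|\,d\mu$, is only borderline divergent at $\mu=\infty$, because it discards the correlation of the pairings $\langle G^{\la+\mu},f\rangle$ across different $\mu$. Recovering boundedness therefore forces one to treat $R_s$ as a genuine integral operator and to exploit the asymmetry between its $\k$- and $\k'$-homogeneities via the two-weight Schur test. It is exactly at $s=2-n/2$, the Sobolev regularity of the Green function $G^\la$ itself (cf. \eqref{DDalpha-embedding}), that the Schur window degenerates, which is what pins the admissible range and explains why the three-dimensional case is so much more restrictive than the two-dimensional one.
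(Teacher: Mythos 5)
Your proof is correct, and it shares the paper's starting ingredients --- the Balakrishnan integral for $(\HH+\la)^{-s/2}$ combined with the Kre\u{\i}n formula \eqref{resolvent} --- but the core estimate is run along a genuinely different route. The paper never forms the momentum-space kernel $K(\k,\k')$: for $n=2$ it factorizes the rank-one remainder through the auxiliary space $L^2(\RE^+,dt)$ of the Balakrishnan parameter, showing first that $f\mapsto g(t)=\langle G^{\la+t},f\rangle$ is bounded from $L^2(\RE^2)$ to $L^2(\RE^+)$ and then that $g\mapsto(-\Delta+\la)^{s/2}\psi_2$ is bounded back into $L^2(\RE^2)$, each step by a one-dimensional Schur test on the mixed $(t,k)$ kernels $T_1$, $T_2$; for $n=3$ it proves nothing new, instead citing Theorem 3.2 of \cite{GMS} (for $\al\geq 0$) and checking that Lemma 5.1 and Proposition 5.2 there extend to $\al<0$ once $\la>|E_\al|$. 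You instead integrate out the parameter $\mu$ first and run a single Schur test on $K(\k,\k')$ with power weights $(k^2+\la)^{-p}$. What your route buys is a self-contained, dimension-uniform argument in which the admissible window for $p$, with endpoints $\frac{s}{2}+\frac{n-2}{2}$ and $1-\frac{s}{2}$, closes exactly at $s=2-\frac{n}{2}$, making the origin of the threshold (the $H^s$-regularity of $G^\la$ itself) completely transparent; the paper's proof, split between a 2D factorization and a citation for 3D, hides this. What the paper's factorization buys is lighter computation: no asymptotics of the $\mu$-integral are needed, and in two dimensions only the \emph{uniform boundedness} of the coefficient $\Lambda^{\la+t}$ is used --- indeed, if you rerun your 2D Schur estimate with the coefficient merely bounded, the same window $\frac{s}{2}<p<1-\frac{s}{2}$ survives, so the $1/\log$ decay you list as needed is superfluous there, whereas the $(\la+\mu)^{-1/2}$ decay in three dimensions is essential, exactly as your row estimate shows. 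Two minor remarks: the coefficient $\Lambda^{\la+\mu}$ you insert is the correct one, even though Eq. \eqref{resolvent} of the paper writes it as $(\Lambda^\la_\al)^{-1}$ (the paper's own substitution in its $n=2$ computation confirms your reading); and the positivity and boundedness of this coefficient for all $\mu\in(0,+\infty)$, which your Schur test uses tacitly, is a second place where the hypothesis $\la>|E_\al|$ enters, beyond justifying the Balakrishnan formula.
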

\begin{proof}[{\bf Proof}]
Let us start from an abstract result. Recall the integral identity (see, e.g., \cite[Ch. 10.4]{BS}): 
\[
x^{s/2} = \frac{\sin(\frac{s}{2}\pi)}{\pi} \int_0^{+\infty} dt\, t^{\frac{s}{2}-1} \frac{x}{t+x} \qquad x\geq 0,\;s\in(0,2).
\]
The latter, applied to $x=(y+\lambda)^{-1}$, $(y+\lambda)>0$, an by means of  the change of variables $t \to 1/t$,  gives 
\[
(y+\lambda)^{-s/2} = \frac{\sin(\frac{s}{2}\pi)}{\pi} \int_0^{+\infty} \frac{dt}{t^{s/2}} \; (y+\lambda +t)^{-1} \qquad (y+\lambda)> 0,\;s\in(0,2).
\]
Hence, by functional calculus and by the Kre\u{\i}n resolvent formula (see Eq. \eqref{resolvent}), one infers  (see \cite{GMS}): 
\[
 ( \HH+\la)^{-s/2} = (-\Delta +\la)^{-s/2} +\frac{\sin \frac{s}{2}{\pi}}{\pi} \int_0^{+\infty}  \frac{dt}{t^{s/2}} \;   (\Lambda^{\lambda+t})^{-1} |G_{\la +t} \rangle \langle G_{\la+t} | \qquad \lambda >|E_\alpha|
\]
($\lambda >0$ if  $n=3$ and $\alpha \geq 0$).

Starting from the latter identity, for $n=3$ and $\al \geq 0 $ a stronger result was proven in \cite{GMS}, see Theorem 3.2, namely that $\DD^{s/2} = H^{s}$ 
for $0<s<1/2$ and that the graph
norm associated to $( \HH+\la)^{s/2}$ is equivalent to the  standard Sobolev norm. 

Being  $\psi \in \DD^{s/2}$ iff $\psi = ( \HH+\la)^{-s/2} f$ for $f\in L^2$, any function in $\DD^{s/2}$ can be written as $\psi= \psi_1 + \psi_2$ with $\psi_1 =  (-\Delta +\la)^{-s/2} f$  and
\[
\psi_2 = \frac{\sin \frac{s}{2}{\pi}}{\pi} \int_0^{+\infty}      \frac{dt}{t^{s/2}} \, (\Lambda^{\lambda+t})^{-1} G_{\la +t} \, g(t) \qquad  
g(t) = \langle G_{\la+t},f \rangle.
\]
Since $\psi_1$ manifestly belongs to $D((-\Delta+\lambda)^{s/2})$ and  the graph norm of $(-\Delta+\lambda)^{s/2}$  is equivalent to the $H^{s}$-norm,   we turn our attention to $\psi_2$. Taking the Fourier transform we have
\[
\hat \psi_2(k)  = \frac{\sin \frac{s}{2}{\pi}}{\pi} \int_0^{+\infty}   \frac{dt}{t^{s/2}}   (\Lambda^{\lambda+t})^{-1} \frac{ g(t)}{k^2+\la +t}\ ,  \qquad  
g(t) =\int_{\RE^3} \d\k \frac{1}{k^2+\la + t} \hat{f} (\k).
\]

We want to prove that $\psi_2\in H^{s}$. For $n=3$ and  $\al <0$ notice that proofs of Lemma 5.1 and Proposition 5.2 of \cite{GMS} hold true without modifications if one assumes
$\lambda > |E_\al|$. Lemma 5.1 is actually   unrelated to the value of $\alpha$ and Proposition 5.2 uses only the fact that $\sup_{t>0}  (\Lambda^{\lambda+t})^{-1} <\infty$, which is indeed the case if $\lambda>|E_\alpha|$. This completes the proof of the  three dimensional case.

Let us consider the two dimensional case. First we prove that 
\be \label{tonno}
 \int_0^{+\infty} dt\  |g(t)|^2 \leq c \|f\|^2.
\ee
We start by noticing that 
\[
\int_0^{+\infty} dt  \lf| \int_{\RE^2} \d\k \frac{1}{k^2+\la + t} \hat{f} (\k)\ri|^2 =
\int_0^{+\infty} dt  \lf| \int_{0}^{+\infty}  dk \frac{\sqrt{k}}{k^2+\la + t} \sqrt{k}A\hat{f} (k)\ri|^2
\]
where
\[
A\hat{f} (k)= \int_0^{2\pi} d\theta \hat{f}(k,\theta) .
\]
Notice that $ \sqrt{k}A\hat{f} \in L^2(\RE^+) $ and that $ \| \sqrt{k}A\hat{f} \|_{ L^2(\RE^+)} \leq \sqrt{2\pi} \|f\| $. Then, to complete the proof of Eq.  \eqref{tonno}, it is sufficient to prove that
\[
T_1 (t,k) = \frac{\sqrt{k}}{k^2+\la + t} 
\]
is the integral kernel of a bounded operator $T_1: L^2 (\RE^+) \to L^2(\RE^+)$.  To this aim, let us notice that, by scaling $t \to t/(k^2+\lambda)$ in the integral we have  
\[
\sup_{k>0}   \int_0^{+\infty} dt\,T_1 (t,k) \f{1}{ t^{1/4}}
\leq  \sup_{k>0} \frac{\sqrt{k}}{(k^2 + \lambda)^{\frac14}}   \int_0^{+\infty} dt\, \frac{1}{ t+1}  \f{1}{ t^{1/4}} <\infty  
\]
and, by scaling $k \to k/\sqrt t$ in the integral, 
\[
\sup_{t>0} t^{1/4} \int_0^{+\infty} dk\,T_1 (t,k)   <  \sup_{t>0} t^{1/4} \int_0^{+\infty} dk\, \frac{\sqrt{k}}{k^2 + t} =   \int_0^{+\infty} dk\, \frac{\sqrt{k}}{k^2 + 1}<\infty ,
\]
then  the claim follows from  Schur's test, see, e.g., \cite{Hal}.

Next we prove that $\hat \psi_2$ is in $H^{s}$. Precisely, we are going to prove that $(-\Delta+\lambda)^{s/2} \psi_2\in L^2$. To this aim we shall show that  $\| (-\Delta+\lambda)^{s/2} \psi_2 \| \leq c  \| g\|_{L^2 (\RE^+)}^2$ and then use  the inequality  \eqref{tonno}. Since $\hat \psi_2$ is spherically symmetric, this is equivalent to prove that  $\sqrt{k} (k^2+\la)^{s/2} \hat \psi_2 $ belongs to $L^2(\RE^+)$. 
Using the above definitions, we have
\[
\sqrt{k} (k^2+\la)^{s/2} \hat \psi_2(k)  = \sqrt{k} \, \frac{\sin \frac{s}{2}{\pi}}{\pi} \int_0^{+\infty}   \frac{dt}{t^{s/2}}    \f{2\pi}{2\pi \al +\ga +\ln(\sqrt{\la+t}/2) }\frac{ (k^2+\la)^{s/2} }{k^2+\la +t}g(t) .
\]
Since $t>0$, for all $\lambda > |E_\alpha|$ there exists a constant $c$ such that
\[
0< \f{2\pi}{2\pi \al +\ga +\ln(\sqrt{\la+t}/2) } <\f{2\pi}{2\pi \al +\ga +\ln(\sqrt{\la}/2) } \leq c.
\]
 Hence, it is sufficient to prove that $T_2: L^2 (\RE^+) \to L^2(\RE^+)$ defined by the integral kernel  
\[
T_2 (t,k) = \frac{k^{\frac12}(k^2+\lambda)^{s/2}  }{t^{s/2}(k^2+\la + t)}
\]
 is  a bounded operator.
By scaling $t \to t/(k^2+\lambda)$ in the integral we have, on one hand, 
\[
\sup_{k>0}  \sqrt{k} \int_0^{+\infty} dt\,T_2 (t,k) \f{1}{\sqrt{t}} = \sup_{k>0} \frac{k}{\sqrt{k^2+1}} \int_0^{+\infty}  dt\,\frac{1}{t^{\frac{s}{2}+\frac12}(1+t)}<\infty. 
\]
On the other hand, by scaling $k \to k/\sqrt{\lambda +t}$ it is easy to see  that 
\[\begin{aligned}
& \sup_{t>0} \sqrt{t} \int_0^{+\infty} dk\,T_2 (t,k)  \f{1}{\sqrt{k}} \leq   c\left( \sup_{t>0} t^{\frac12-\frac{s}{2}} \int_0^{+\infty} dk\, \frac{k^{s}}{k^2+\lambda +t} + \lambda^{s/2} \sup_{t>0} t^{\frac12-\frac{s}{2}} \int_0^{+\infty} dk\, \frac{1}{k^2+\lambda +t} \right)  \\ 
= &  c\left( \sup_{t>0} \left(\frac{t}{\lambda+t} \right)^{\frac12-\frac{s}{2}}\int_0^{+\infty} dk\, 
\frac{k^{s}}{k^2+1} 
+ \lambda^{s/2} \sup_{t>0} 
\frac{t^{\frac12-\frac{s}{2}}}{(t+\lambda)^{\frac12} }
\int_0^{+\infty} dk\, \frac{1}{k^2+1} 
\right)
\end{aligned}\]
here the constant $c$ depends on $s$. Hence,  the claim follows from  Schur's test.

Then we have
\[
\| (-\Delta+\lambda)^{s/2} \psi_2 \|^2= 2\pi 
\int_0^{+\infty} dk \lf| \sqrt{k} (k^2+\la)^{s/2} \hat \psi_2(k) \ri|^2 \leq c \| g\|_{L^2 (\RE^+)}^2 \leq c\|f\|^2
\]
 and the proof is complete.
 \end{proof}

Thanks to Sobolev embeddings, see Eq. \eqref{r:embedding},  and the results in Proposition \ref{frazio} one has the continuous embeddings
\begin{equation}\label{modifiedembeddings}
\begin{aligned}
& \DD^{s/2}(\RE^2) \hookrightarrow H^s(\RE^2) \hookrightarrow L^q(\RE^2)  \qquad 2\leq q < \infty\ \ \  & s\in [s_c,1) ;\\
& \DD^{s/2}(\RE^3) \hookrightarrow H^s(\RE^3) \hookrightarrow L^q(\RE^3) \qquad  2\leq q < 3\ \ \  & s\in [s_c,1/2) ; 
\end{aligned}
\end{equation}
where $s_c= n(\frac{1}{2}-\frac{1}{q})$, and the corresponding inequalities
\begin{equation}\label{modifiedineq}
\begin{aligned}
\text{if }n=2\qquad& \|\psi\|_{L^q} \leq c\|\psi\|_{H^s} \leq c_2\|\psi\|_{\DD^{s/2}} \quad  \qquad 2\leq q < \infty\ \ \  & s\in [s_c,1); \\
\text{if }n=3\qquad&  \|\psi\|_{L^q} \leq c\|\psi\|_{H^s} \leq c_3\|\psi\|_{\DD^{s/2}}\quad  \qquad  2\leq q < 3\ \ \ \ & s\in [s_c,1/2).
\end{aligned}
\end{equation}

\vskip5pt\noindent
On the other hand $\HH+\la$ is a self-adjoint and positive operator, and the spectral theorem allows to build the scale of Hilbert spaces $\DD^{s/2}$ with the inner product $\langle\psi_1,\psi_2\rangle_{\DD^{s/2}} : = \langle ( \HH+\la)^{s/2} \psi_1, ( \HH+\la)^{s/2} \psi_2 \rangle $. This is a family of real interpolation spaces, in particular this means that (see, e.g., Section 4.3.1 in  \cite{Lunardi18}) 
\begin{equation}\label{interpolationgeneral}
\|\psi\|_{\DD^{(1-\theta) a+\theta b}}\leq c\|\psi\|^{1-\theta}_{\DD^{a}} \|\psi\|^\theta_{\DD^{b}} \qquad  a,b \geq 0,\; \theta\in (0,1) . 
\end{equation}
For $a=0,\ b=1/2,\ \theta=s$ one in particular obtains the inequality
\begin{equation*}
\|\psi\|_{\DD^{s/2}}\leq c\|\psi\|^{1-s}_{L^2} \|\psi\|^s_{\DD^{1/2}}\qquad   s\in (0,1) ;
\end{equation*}
and,  for  $a=0,\ b=1,\ \theta=s$, the inequality  
\begin{equation*}
\|\psi\|_{\DD^{s}}\leq c\|\psi\|^{1-s}_{L^2} \|\psi\|^s_{\DD}\qquad  s\in (0,1).
\end{equation*}
From \eqref{interpolationgeneral} and \eqref{modifiedineq} we finally obtain the Gagliardo-Nirenberg inequalities adapted to the scale of Hilbert spaces $\DD^{s}$:

\begin{equation}\label{adaptedGN}
\begin{aligned}
\text{if }n=2\qquad& \|\psi\|_{L^q} \leq c\|\psi\|^{1-s}_{L^2} \|\psi\|^s_{\DD^{1/2}}\quad  \qquad 2\leq q < \infty\ \ \  & s\in [s_c,1); \\
\text{if }n=3\qquad&\|\psi\|_{L^q} \leq c\|\psi\|^{1-s}_{L^2} \|\psi\|^s_{\DD^{1/2}}\quad  \qquad  2\leq q < 3\ \ \ \ & s\in [s_c,1/2).
\end{aligned}
\end{equation}

\subsection{Evolution operators and space-time estimates} Let us now introduce space-time Banach spaces and several properties of the  evolution operators generated by $\HH$ needed in the sequel. \\ For any exponent $\rho\in[1,+\infty]$ we denote by $\rho'\in[1,+\infty]$ its H\"older conjugate:
\[
\frac1\rho + \frac1{\rho'} =1.
\]
We will denote $L^\rho( [0,T]; L^\sigma (\RE^n))$ by $L^{\rho,\sigma}$ and the corresponding norm by $\|\cdot\|_{\rho,\sigma}$.
The unitary group generated by the operator $\HH$ is denoted by:
\be\label{evol}
U\phi (t) := e^{-it \HH} \phi .
\eeq
The corresponding Duhamel operator is
\be\label{duham}
 \qquad \Gamma u (t) := \int_0^t U(t-s) \, u(s) \, ds . 
\eeq

\begin{dfn}[Admissible pair] 
We say that a pair of (time,space) exponents $(\rho, \sigma)$ is admissible if 
\[\frac2{\rho} + \frac{n}{\sigma} = \frac{n}{2}, \]
and 
\[
\begin{aligned}
& \sigma \in[2,+\infty) \qquad &\text{if $n=2$}; \\
& \sigma \in[2,3) \qquad &\text{if $n=3$}.
\end{aligned}
\]
Correspondingly $\rho\in (2,+\infty]$ if $n=2$ or $\rho \in (4,+\infty] $ if $n=3$.
\end{dfn}
\begin{prop}[Strichartz estimates for $\HH$]  For all the admissible pairs $(\rho,\sigma)$ and $(\mu,\nu)$  there exists a positive constant $C$ such that 
\begin{equation}\label{strichartz1}
\|UP_{ac}(\HH)\phi\|_{\rho,\sigma} \leq C \|\phi\|
\end{equation}
and
\begin{equation}\label{strichartz2}
\|\Gamma P_{ac}(\HH) u \|_{\rho,\sigma} \leq C \|u\|_{\mu',\nu'}
\end{equation}
for all $T>0$. 
\end{prop}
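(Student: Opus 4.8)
The plan is to split the propagator into its free part and a correction produced by the point interaction, and to treat the two separately. The energy bound is free: since $U(t)=e^{-it\HH}$ is unitary and $P_{ac}(\HH)$ is an orthogonal projection, $\|U(t)P_{ac}(\HH)\phi\|=\|P_{ac}(\HH)\phi\|\le\|\phi\|$ uniformly in $t$. For the free Schr\"odinger group $e^{it\Delta}$ the classical dispersive estimate $\|e^{it\Delta}f\|_{L^\infty}\le C|t|^{-n/2}\|f\|_{L^1}$ together with the $TT^{*}$/Keel--Tao argument yields the homogeneous and inhomogeneous Strichartz estimates on the full admissible range, in particular on the strictly non-endpoint sub-range we use here ($\rho>2$, $\sigma<\infty$ for $n=2$; $\rho>4$, $\sigma<3$ for $n=3$). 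So the real content is the correction term, and the constants will be $T$-independent because all the underlying decay estimates hold for every time.

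To isolate the correction I would combine the Stone formula with the Kre\u{\i}n resolvent formula \eqref{resolvent}. Writing
\[
U(t)P_{ac}(\HH)=\frac{1}{2\pi i}\int_0^{+\infty}e^{-it\omega}\big[R(\omega+i0)-R(\omega-i0)\big]\,d\omega,\qquad R(z)=(\HH-z)^{-1},
\]
and substituting \eqref{resolvent} (with spectral parameter $z=-\omega\pm i0$), one obtains $U(t)P_{ac}(\HH)=e^{it\Delta}+W(t)$, where $W(t)$ comes from the rank-one term $(\Lambda^{\omega})^{-1}|G_{\omega}\rangle\langle G_{\omega}|$. Thus $W(t)$ has a kernel that factorizes, for each $\omega$, as the product of a profile $G_{\omega}(\x)$ against a linear functional $\langle G_{\omega},\,\cdot\,\rangle$, weighted by $(\Lambda^{\omega})^{-1}$ and integrated against $e^{-it\omega}$ over the continuous spectrum. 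The projection $P_{ac}(\HH)$ removes the bound state $\psi_\alpha$ when the eigenvalue $E_\alpha$ is present, so the spectral integral genuinely runs over $[0,\infty)$.

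For $W(t)$ one cannot simply invoke Keel--Tao to $L^\infty$, because the relevant profiles are unbounded: in three dimensions $G_{\omega}(\x)$ behaves like $|\x|^{-1}$ near the origin, so $W(t)f\notin L^\infty$. Instead I would estimate $W(t)$ and its Duhamel counterpart $\int_0^t W(t-s)u(s)\,ds$ directly, exploiting the factorized structure: the spatial output is controlled by $\|G^\lambda\|_{L^\sigma}$, the input is paired through $\langle G_{\omega},\cdot\rangle$ (i.e.\ by $\|G^\lambda\|_{L^{\nu}}$ on the dual side), and the remaining scalar integral in $\omega$ supplies the time decay. Because $G^\lambda\in L^\sigma(\RE^n)$ precisely for $\sigma<3$ when $n=3$ and for every $\sigma<\infty$ when $n=2$, this is exactly what fixes the spatial exponents in the definition of admissible pair and truncates the three-dimensional range at $\sigma=3$.

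The step I expect to be the main obstacle is the uniform-in-time bound on the scalar oscillatory integral in $\omega$, whose decay must match the $|t|^{-n/2}$ rate. This hinges on the behaviour of $(\Lambda^{\omega})^{-1}$ and of the boundary values of $G_{\omega}$ at the two ends of the spectrum: near threshold $\omega\to0^+$, where the logarithm (for $n=2$) and the $\sqrt{\omega}$ (for $n=3$) in $\Lambda^{\omega}$ produce the delicate low-energy/zero-resonance contribution, and at high energy, handled by oscillation/stationary-phase estimates. This is the same kind of resolvent and Green's-function analysis exploited in \cite{GMS}. Once the decay of $W(t)$ is established on the stated range, combining it with the free contribution gives \eqref{strichartz1}, and the same splitting applied to the Duhamel operator $\Gamma$—with the scalar time-decay feeding a fractional-integration (Hardy--Littlewood--Sobolev) bound in the non-endpoint regime—gives \eqref{strichartz2}.
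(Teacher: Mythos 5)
Your reduction of the proposition to a pointwise-in-time dispersive bound matches the paper's logic, but be aware that the paper does not actually prove that bound: it quotes it. The estimate \eqref{fundamental} (an $L^{\sigma'}\to L^{\sigma}$ bound with decay $|t|^{-n(\frac12-\frac1\sigma)}$, restricted to $\sigma<3$ when $n=3$ and $\sigma<\infty$ when $n=2$) is taken from \cite{DMSY} for $n=3$ and \cite{CMY} for $n=2$, and those references also contain the Strichartz estimates \eqref{strichartz1}--\eqref{strichartz2} themselves (on all of $\RE$, hence uniformly in $T$); the only argument the paper supplies is the standard non-endpoint $TT^*$/Hardy--Littlewood--Sobolev remark that the dispersive bound implies both inequalities. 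Moreover, in those references the dispersive bound is obtained through $L^p$-boundedness of the wave operators and the intertwining relation, not by a direct estimate of the perturbed propagator. So your proposal is, in effect, an attempt to reprove the cited results from scratch, and this is where it breaks down.

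The concrete gap is in your treatment of the correction term $W(t)$: what you label ``the main obstacle'' is the entire theorem, and the mechanism you propose for it would fail. You suggest bounding the spatial output by $\|G^\lambda\|_{L^\sigma}$, the input pairing by $\|G^\lambda\|_{L^\nu}$, and letting ``the remaining scalar integral in $\omega$'' supply the time decay. But the boundary values $G_{\omega\pm i0}$ carry $\x$-dependent oscillatory phases (for $n=3$, $G_{\omega\pm i0}(\x)=e^{\pm i\sqrt{\omega}\,x}/(4\pi x)$), so the $|t|^{-n/2}$ decay comes from stationary phase in $\omega$ coupled to $|\x|+|\y|$ and $t$; once you take absolute values in the spatial pairings, the oscillation is destroyed and the residual scalar $\omega$-integral neither converges absolutely nor produces any decay in $t$. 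This coupling between time decay and spatial variables is exactly why in three dimensions the natural pointwise result is a \emph{weighted} $L^1\to L^\infty$ estimate (\cite{DPT,IS17}), and why the unweighted $L^{\sigma'}\to L^\sigma$ bound used here was derived in \cite{DMSY} by the wave-operator route rather than by a kernel estimate. In two dimensions the low-energy behaviour of the coefficient (the logarithmic threshold singularity) is a genuinely delicate analysis that constitutes the bulk of \cite{CMY}; pointing to \cite{GMS} for it is off-target, since that paper concerns stationary (elliptic) estimates for fractional powers and contains no oscillatory or time-decay analysis. To close your argument you would either have to carry out the full stationary-phase and threshold analysis in both dimensions---essentially reproving \cite{DPT}, \cite{DMSY} and \cite{CMY}---or cite those results, which is what the paper does.
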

\noindent
These bounds are a direct consequence of the fundamental bound 
\begin{equation}\label{fundamental}
\| U P_{ac} (\HH)\phi\|_\sigma \leq C |t|^{-n(\frac12-\frac1\sigma)} \|\phi\|_{\sigma'} \qquad t\in \RE\backslash\{0\}.
\end{equation}
\noindent
The proof of the bound \eqref{fundamental} appeared in \cite{DMSY} for $n=3$ and \cite{CMY} for $n=2$ (together with the Strichartz estimates \eqref{strichartz1} and \eqref{strichartz2}, with the time interval $[0,T]$ replaced by $\RE$), see also \cite{DPT} and \cite{IS17}.\\
In the rest of the paper we will find convenient a different parametrization and notation for the admissible pair, obtained changing $\rho$ to $r$ and $\sigma$ to $p+1$.
\begin{dfn}\label{d:rp} For any $p\in[1,+\infty)$ if $n=2$ or $p\in[1,2)$ if $n=3$,  we set \
\begin{equation*}
r= \f{4(p+1)}{n(p-1)} 
\end{equation*}
so that  $(r,p+1)$ is  a pair of admissible exponents. 
\end{dfn}
We summarize in the following proposition the properties of the linear dynamics needed in the proof of the main theorem (see \cite{K,Y}).
\begin{prop} \label{propa}
Let 
\[
\begin{aligned}
&p\in(1,+\infty) \qquad &  \text{if } n=2;  \\
&p\in(1,2) & \text{if }  n=3 .
\end{aligned}
\]
and let $(r,p+1)$ an admissible pair.\\
Then,  the operators $U$ and $\Ga$ are defined and bounded between the following spaces with norms uniformly bounded for $T\leq 1$:
\[
\begin{aligned}
& a)\; U: L^2 \to L^{\infty,2} \qquad \qquad \qquad \qquad \qquad& b)\; & U: L^2\to L^{r,p+1} \\
& c)\; \Ga: L^{1,2}\to L^{\infty,2} & d)\; & \Ga: L^{1,2}\to L^{r, p+1} \nonumber \\ \nonumber
& e)\; \Ga: L^{r',1+1/p} \to L^{\infty,2} & f)\; & \Ga:  L^{r',1+1/p}\to L^{r,p+1}
\end{aligned}
\]
\end{prop}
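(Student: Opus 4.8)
The plan is to reduce every one of the six mapping properties to the Strichartz estimates \eqref{strichartz1}--\eqref{strichartz2}, which however only control the absolutely continuous part $UP_{ac}(\HH)$, and then to dispose of the discrete (bound state) contribution by hand. Accordingly I would decompose the identity as $I=P_{ac}+P_{\mathrm p}$, where $P_{\mathrm p}=|\psi_\alpha\rangle\langle\psi_\alpha|$ is the rank-one projection onto the normalized eigenvector $\psi_\alpha$ whenever the eigenvalue $E_\alpha$ exists (and $P_{\mathrm p}=0$, $P_{ac}=I$ otherwise, in which case the statement is immediate from Strichartz). Since $P_{ac}$ and $P_{\mathrm p}$ commute with $\HH$, they commute with both $U$ and $\Ga$, so each claim splits into an a.c.\ part and a point part, the latter being the only place where genuine work is required.

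For the two $L^2$-valued statements a) and c) I would not even invoke Strichartz: unitarity of $U(t)$ gives $\|U\phi(t)\|=\|\phi\|$, hence $\|U\phi\|_{\infty,2}=\|\phi\|$, while $\|\Ga u(t)\|\leq\int_0^t\|u(s)\|\,ds\leq\|u\|_{1,2}$ by Minkowski and unitarity, so $\|\Ga u\|_{\infty,2}\leq\|u\|_{1,2}$. For the $L^{r,p+1}$-valued a.c.\ parts I would use the admissible pair $(r,p+1)$ of Definition \ref{d:rp}: item b) is exactly \eqref{strichartz1}; the a.c.\ part of f) is the diagonal inhomogeneous estimate \eqref{strichartz2} with $(\rho,\sigma)=(\mu,\nu)=(r,p+1)$, recalling $\nu'=1+1/p$; the a.c.\ part of d) follows from the global-in-time form of \eqref{strichartz1} together with Minkowski's integral inequality, writing $\Ga(P_{ac}u)=\int_0^T U(\cdot-s)P_{ac}u(s)\,ds$ and estimating $\|\Ga(P_{ac}u)\|_{r,p+1}\leq\int_0^T\|U(\cdot-s)P_{ac}u(s)\|_{r,p+1}\,ds\leq C\int_0^T\|u(s)\|\,ds=C\|u\|_{1,2}$ by time-translation invariance. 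The a.c.\ part of e) is the dual endpoint statement, which I would obtain by duality from \eqref{strichartz1}: testing $\Ga(P_{ac}u)(t)$ against $g\in L^2$, moving $U$ onto $g$ by unitarity, and using Hölder in time together with $\|UP_{ac}g\|_{r,p+1}\leq C\|g\|$ and $(p+1)'=1+1/p$.

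The point-spectrum contribution, invisible to Strichartz, is the core of the argument. Two facts drive it: first, $U(t)P_{\mathrm p}\phi=e^{-itE_\alpha}\langle\psi_\alpha,\phi\rangle\,\psi_\alpha$ has purely oscillatory time dependence, so all its spatial norms are constant in $t$; second, $\psi_\alpha\in\DD=D(\HH)$, hence $\psi_\alpha\in L^{p+1}$ by the embedding \eqref{DDalpha-embedding}, which applies precisely because $p+1\in[2,\infty)$ when $n=2$ and $p+1\in(2,3)$ when $n=3$. Each point term then collapses to a scalar time integral. For b) one gets $\|UP_{\mathrm p}\phi\|_{r,p+1}=T^{1/r}|\langle\psi_\alpha,\phi\rangle|\,\|\psi_\alpha\|_{p+1}\leq C\,T^{1/r}\|\phi\|$. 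For the $\Ga$-items one has $\Ga(P_{\mathrm p}u)(t)=c(t)\,\psi_\alpha$ with $|c(t)|\leq\int_0^t|\langle\psi_\alpha,u(s)\rangle|\,ds$, which I would bound either by $\int_0^T\|u(s)\|\,ds=\|u\|_{1,2}$ via Cauchy--Schwarz (for d)), or, by Hölder in space and then in time, by $\|\psi_\alpha\|_{p+1}\int_0^T\|u(s)\|_{1+1/p}\,ds\leq\|\psi_\alpha\|_{p+1}\,T^{1/r}\|u\|_{r',1+1/p}$ (for e) and f)).

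In every case the spatial norm is $t$-independent, so the outer $L^\rho_t$ norm over $[0,T]$ only contributes an extra factor $T^{1/r}$ (and $T^{2/r}$ for f), where the bound on $|c(t)|$ is itself already of order $T^{1/r}$); all of these are bounded for $T\leq1$, which is exactly the mechanism producing the asserted uniform bound. I expect the main (though modest) obstacle to be purely bookkeeping: keeping straight which admissible pair and which Hölder pairing to use for each of the six items and verifying that $\psi_\alpha$ lands in the right Lebesgue space; there is no deep estimate hidden in the point part once one records that $\psi_\alpha\in\DD$ and that, when $n=3$ and $\alpha\geq0$, no eigenvalue exists so $P_{\mathrm p}=0$ and the point contribution is vacuous.
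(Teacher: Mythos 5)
Your proposal is correct and follows essentially the same route as the paper: both decompose the dynamics into the absolutely continuous part, handled by the Strichartz estimates \eqref{strichartz1}--\eqref{strichartz2}, and the rank-one point-spectrum part, handled by explicit H\"older bounds exploiting $\psi_\alpha\in L^\sigma$ (with the restriction $\sigma<3$ when $n=3$, and vacuously when $n=3$, $\alpha\geq 0$). The only cosmetic difference is that you re-derive the a.c.\ parts of d) and e) by Minkowski's inequality and duality, whereas the paper simply invokes the inhomogeneous estimate \eqref{strichartz2} with the mixed admissible pairs $(\infty,2)$ and $(r,p+1)$.
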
 
\begin{proof}[{\bf Proof}]
Properties $a)$ and $c)$ hold true since $U$ is a unitary operator in $L^2$.
The other properties follow from the spectral theorem together with  Strichartz estimates for the continuous part of the spectrum. Let us give few additional details about the proof.  Let us consider the two dimensional case and write
\be \label{decomp}
U = U_1  + U_2  \qquad U_1  = e^{-it\HH} P_{ac} (\HH) \qquad U_2= e^{-iE_\al t}\frac{|\psi_\al\ran\lan \psi_\alpha|}{\|\psi_\al\|^2}.
\ee
We decompose $\Ga= \Ga_1 +\Ga_2$ accordingly. For the two dimensional case, it was proved in \cite{CMY} that
$U_1$ and $\Ga_1$ satisfy Strichartz  estimates and therefore the remaining properties b) and c)--f) are true since the indexes
$(r,p+1)$ are admissible. Concerning $U_2$ and $\Ga_2$, it is sufficient to notice that $\psi_\al \in L^\sigma$ for $1\leq \sigma <\infty$ and that
we are assuming $T\leq1$; then straightforward calculations using H\"older's inequality give: 
\begin{equation}\label{U2Gamma2}
\text{for } n=2 \qquad 
\begin{aligned}
& \|U_2f\|_{\rho,\sigma} \leq T^{1/\rho} \frac{\|\psi_\alpha\|_\sigma \|\psi_\alpha\|_\gamma}{\|\psi_\alpha\|^2} \|f\|_{\gamma'}  \\
& \|\Gamma_2 u\|_{\rho,\sigma} \leq T^{1/\rho+1/\mu} \frac{\|\psi_\alpha\|_\sigma \|\psi_\alpha\|_\gamma}{\|\psi_\alpha\|^2} \|u\|_{\mu',\gamma'}  
\end{aligned}
\qquad \sigma,\gamma\in[1,+\infty);\; \rho,\mu\in[1,+\infty].
\end{equation}
Which imply $(b)$, $(d)$, $(e)$, and $(f)$ for $U_2$ and $\Gamma_2$. 

In the three dimensional case, for $\al <0$ we argue in the same way using again \eqref{decomp}. Strichartz estimates for $U_1$ and $\Ga_1$
have been proved in \cite{DMSY},  while for $U_2$ and $\Ga_2$ it is important to notice that $\psi_\al \in L^\sigma$ only for $1\leq \sigma <3$. Hence, for $n=3$ bounds of the form \eqref{U2Gamma2} still hold true but with the constraint  $ \sigma,\gamma\in[1,3)$, which causes no problem since to prove $(b)$, $(d)$, $(e)$, and $(f)$ one needs to set $\sigma = 2$ or $\sigma = p+1$, and similarly for $\gamma$.  If $\al \geq 0$ then 
there is no point spectrum and both  $U_2$ and $\Ga_2$ are absent.
\end{proof}
\begin{remark}\label{T_0}
The statement of Proposition \ref{propa} holds true if $T\leq 1$ is changed in $T\leq T_0$, for any positive $T_0$. Obviously, in this case the norms of the operators $U$ and $\Gamma$ would depend on $T_0$. Since in what follows we shall use Proposition \ref{propa} to study the local well-posedness, there is no loss of generality in restricting to the choice $T_0 =1$.  
\end{remark}
\begin{remark}\label{strengthening}
Some of the above properties can be strengthened. In particular,  in Proposition \ref{propa} $c)$ the target space is actually $C([0,T], L^2)$. See Proposition 7.3.4 in \cite{cazhar}).
\end{remark}
\vskip5pt
\noindent
We end this section introducing four Banach spaces needed in the following, and we reformulate dispersive estimates in these spaces. In what follows we assume that $(r,p+1)$ is an admissible couple, according to Definition \ref{d:rp}.  The first couple is given by:
\[
\X = L^{\infty,2} \cap L^{r,p+1} \qquad  \text{and}\qquad\tilde{\X} = L^{1,2} + L^{r',1+1/p} 
\]
with norms defined by 
\[
\| f \|_\X = \max \{\| f\|_{{\infty, 2} }, \| f\|_{{r,p+1} }\} \qquad
\| f \|_{\tilde{\X}} = \inf_{g+h=f} \{\|g\|_{{1,2}} + \| h\|_{{r' , 1+1/p}}\}.
\]
\vskip5pt

Notice that ${\X}$  is the topological dual of $\tilde{\X}$, i.e. $\X=\tilde{\X}^{\prime}$, and that Proposition \ref{propa} has the following immediate corollary:
\begin{cor}\label{c:UGamma} Under the same assumptions of Proposition \ref{propa} the following holds true:
\be \label{lampada}
U:L^2 \to {\X} \qquad\text{and}\qquad  \Ga :  \tilde{\X}  \to  {\X}, 
\ee
as bounded operators and the operator norms are uniformly bounded for every finite $T$.
\end{cor}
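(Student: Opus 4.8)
The plan is to unpack the definitions of the intersection norm on $\X$ and the sum norm on $\tilde\X$, and then read off both claims directly from Proposition \ref{propa}. Indeed the corollary is essentially a repackaging of the six mapping properties $a)$--$f)$ into the language of these two Banach spaces, so the work is purely formal.

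For the first mapping $U:L^2\to\X$, I would simply use that the norm on $\X$ is the maximum of the $L^{\infty,2}$ and $L^{r,p+1}$ norms. Hence for $\phi\in L^2$,
\[
\|U\phi\|_\X = \max\{\|U\phi\|_{\infty,2},\,\|U\phi\|_{r,p+1}\} \leq C\|\phi\|,
\]
where the final bound combines properties $a)$ and $b)$ of Proposition \ref{propa} and $C$ is the larger of the two constants appearing there; these are uniformly bounded for $T\leq 1$.

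For the second mapping $\Ga:\tilde\X\to\X$, the key point is to handle the sum-space structure correctly. Given $f\in\tilde\X$ and any decomposition $f=g+h$ with $g\in L^{1,2}$ and $h\in L^{r',1+1/p}$, linearity of $\Ga$ gives $\Ga f = \Ga g + \Ga h$. I would estimate the two components of the $\X$-norm separately: for the $L^{\infty,2}$ component I use $c)$ on $\Ga g$ and $e)$ on $\Ga h$, while for the $L^{r,p+1}$ component I use $d)$ on $\Ga g$ and $f)$ on $\Ga h$. In both cases the triangle inequality produces a bound of the form $C(\|g\|_{1,2}+\|h\|_{r',1+1/p})$, with $C$ the maximum of the relevant constants, so taking the maximum over the two components gives
\[
\|\Ga f\|_\X \leq C\big(\|g\|_{1,2}+\|h\|_{r',1+1/p}\big).
\]

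The one genuine step is that this inequality holds for \emph{every} admissible decomposition $f=g+h$; passing to the infimum over all such decompositions on the right-hand side produces exactly $\|f\|_{\tilde\X}$, whence $\|\Ga f\|_\X \leq C\|f\|_{\tilde\X}$, which is the asserted boundedness. The uniformity of the operator norm for $T\leq 1$ is inherited from the uniform constants in Proposition \ref{propa}, and the extension to arbitrary finite $T$ follows from Remark \ref{T_0}. I do not expect any real obstacle here; the only thing to be careful about is not to fix a particular decomposition too early, and to take the infimum only after the estimate has been established for an arbitrary one.
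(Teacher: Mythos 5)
Your proposal is correct and is exactly the argument the paper intends: the paper states this corollary as an "immediate" consequence of Proposition \ref{propa}, and your write-up simply makes explicit the standard intersection/sum-space bookkeeping — combining $a)$ and $b)$ for $U$, applying $c)$, $d)$, $e)$, $f)$ to an arbitrary decomposition $f=g+h$, and taking the infimum at the end. No gaps; the care about taking the infimum only after the estimate holds for every decomposition is precisely the right (and only) point of substance.
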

A second couple  of useful spaces is given by:
\[{\Z} = \{ v \in {\X}|  \partial_t v  \in {\X},\, \HH v \in L^{\infty,2} \}
\qquad \text{and} \qquad 
\tilde{\Z} =  \{ v \in L^{\infty,2}| \partial_t v \in \tilde{\X} \}
\]
with norms given by
\[
\| v \|_\Z = \max \{ \| v\|_{\X },   \| \partial_t v\|_{\X}, \| \HH v \|_{{\infty,2}} \} \qquad
\| v \|_{\tilde{\Z}} =  \max \{ \| v\|_{{\infty,2} },   \| \partial_t v\|_{\tilde{\X}} \}.
\]
In the previous definitions and in the following, the expression $\partial_t v$ is to be interpreted as the distributional derivative of the $Y$-vector valued distribution $v\in \mathscr{D}^{\prime}(I,Y):=\mathcal L(\mathscr D(I), Y)$, where $I$ is an open interval, $\mathscr D:=C_0^{\infty}(I)$ and $Y$ is a relevant Banach space (see for example \cite{cazhar}, sections 1.4.4 and 1.4.5 for details). 
 \begin{prop} \label{gatto}
Assume that $p> 1$ if $n=2$ and $1< p<2$ if $n=3$. Then,  the operators $U$ and $\Ga$ are defined and bounded between the following spaces:
\[
U:\DD \to {\Z} \qquad \Ga :  \tilde{\Z}  \to {\Z}
\]
with norms uniformly bounded in $T$:
\begin{align}
&\| U \phi \|_{\Z} \leq c \| \phi \|_{\DD} ;\label{uno} \\
& \| \Ga f\|_\Z \leq c \| f \|_{\tilde{\Z}} .\label{due}
\end{align}
\end{prop}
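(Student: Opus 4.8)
The plan is to reduce both bounds to the mapping properties of $U$ and $\Ga$ already recorded in Proposition \ref{propa} and Corollary \ref{c:UGamma}, using two structural features of the problem. The first is that $U(t)=e^{-it\HH}$ commutes with $\HH$ and is unitary on $L^2$. The second is that the time derivative of the Duhamel term can be written in two complementary ways: differentiating under the integral sign one gets $\partial_t\Ga f = f - i\HH\Ga f$, while the change of variables $s\mapsto t-s$ followed by an integration by parts in time gives $\partial_t\Ga f(t) = U f(0)(t) + \Ga(\partial_t f)(t)$. Eliminating $\partial_t\Ga f$ between these two relations yields $\HH\Ga f = -if + i\,Uf(0) + i\,\Ga(\partial_t f)$, which expresses the three quantities controlling the $\Z$-norm (namely $\Ga f$, $\partial_t\Ga f$, $\HH\Ga f$) purely in terms of objects to which Proposition \ref{propa} and Corollary \ref{c:UGamma} apply.

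For the bound \eqref{uno} I would argue directly. Since $\DD\hookrightarrow L^2$, Corollary \ref{c:UGamma} gives $\|U\phi\|_\X\le c\|\phi\|\le c\|\phi\|_\DD$. Commutation gives $\partial_t U\phi = -i\,U(\HH\phi)$ and $\HH U\phi = U(\HH\phi)$; since $\|\HH\phi\|\le c\|\phi\|_\DD$, Corollary \ref{c:UGamma} controls $\|\partial_t U\phi\|_\X$, while unitarity of $U(t)$ keeps $\|\HH U\phi(t)\|=\|\HH\phi\|$ constant in $t$ and hence bounds $\|\HH U\phi\|_{\infty,2}$. This part involves no real difficulty.

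For \eqref{due} I would treat the three $\Z$-norms of $\Ga f$ separately. The $\X$-norm is handled by the elementary embedding $L^{\infty,2}\hookrightarrow L^{1,2}\hookrightarrow\tilde{\X}$ valid on the finite interval $[0,T]$ (with $\|f\|_{\tilde{\X}}\le T\|f\|_{\infty,2}$), followed by Corollary \ref{c:UGamma}. The $\X$-norm of $\partial_t\Ga f$ and the $L^{\infty,2}$-norm of $\HH\Ga f$ are then estimated through the identities above: each of the terms $f$, $Uf(0)$, $\Ga(\partial_t f)$ is bounded using Corollary \ref{c:UGamma} (for the $\X$-norm) or Proposition \ref{propa} items $a)$, $c)$, $e)$ (for the $L^{\infty,2}$-norm), with $\|\partial_t f\|_{\tilde{\X}}\le\|f\|_{\tilde{\Z}}$ and $\|f\|_{\infty,2}\le\|f\|_{\tilde{\Z}}$.

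The main obstacle is the rigorous justification of the two Duhamel identities and, crucially, of the trace $f(0)$: a priori $f\in\tilde{\Z}$ is only an $L^{\infty,2}$ function whose distributional time derivative lies in $\tilde{\X}$. To make sense of $f(0)$ I would split $\partial_t f = g+h$ with $g\in L^{1,2}$, $h\in L^{r',1+1/p}$, and observe that, since $r'<\infty$ on $[0,T]$, both primitives $\int_0^t g$ and $\int_0^t h$ are continuous with values in $L^2$ and $L^{1+1/p}$ respectively; thus $f$ has a representative in $C([0,T],L^2+L^{1+1/p})$, so $f(0)$ is defined in $L^2+L^{1+1/p}$, and the bound $f\in L^{\infty,2}$ forces $f(0)\in L^2$ with $\|f(0)\|\le\|f\|_{\infty,2}$. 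Once $f(0)\in L^2$ is secured, the identities follow by approximating $f$ with smooth vector-valued functions and passing to the limit, using strong continuity of $U(t)$ (cf. \cite[Sec. 1.4.4--1.4.5]{cazhar} and Remark \ref{strengthening}). I expect this trace and regularity bookkeeping, rather than the estimates themselves, to be the delicate part.
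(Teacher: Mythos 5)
Your proposal is correct and follows essentially the same route as the paper: bound \eqref{uno} via commutation of $U$ with $\HH$ and Corollary \ref{c:UGamma}, and bound \eqref{due} via the two Duhamel identities (the paper's \eqref{identity1} and \eqref{identity2}), with the trace $f(0)$ justified by showing $f$ has a continuous representative in a weaker space. The only cosmetic difference is that the paper realizes this weaker space as $H^{-1}$ (via $\tilde{\X}\hookrightarrow L^1([0,T];H^{-1})$) whereas you use $L^2+L^{1+1/p}$; both arguments are sound and equivalent in substance.
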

\begin{proof}[{\bf Proof}]
By Corollary  \ref{c:UGamma} we have $\| U \phi \|_\X \leq c \|\phi\|$, then by spectral theorem and again Corollary \ref{c:UGamma} we have
\[
\| \HH U\phi \|_{\infty,2} \leq \| \HH U\phi \|_\X=\| \partial_t U\phi \|_\X=  \| U \HH  \phi \|_\X \leq c\|  \HH  \phi \|,
\]
which proves \eqref{uno}.

We note that the obvious  inclusion $L^{\infty,2}\subset L^{1,2}$ implies $\tilde{\Z} \subset \tilde{\X}$, hence,  for $f\in \tilde{\Z} $, we have $\Ga f \in \X$ by Corollary \ref{c:UGamma}. In particular we have
\[
\| \Ga f \|_\X \leq c \| f\|_{\tilde{\X}} \leq c \| f \|_{1,2} \leq c \|f\|_{\infty,2} \leq c\|f\|_{\tilde{\Z}}.
\]
Notice that
\beq\label{identity1}
\partial_t \Ga f= \Ga \partial_t f + U f(0).
\eeq
This identity is justified as an identity in $\X$, whenever $f\in \tilde{\Z}$ by the following argument. Firstly note that by Sobolev embeddings 
there holds true $H^{1}(\RE^n) \hookrightarrow L^{p+1}(\RE^n)$, hence, by duality, $L^{1+1/p}(\RE^n)  \hookrightarrow H^{-1}(\RE^n)$. Which in turn implies  $ L^{r'}([0,T]; L^{1+1/p}) \hookrightarrow L^1([0,T]; H^{-1})$  (because $L^{s}([0,T]) \hookrightarrow L^{1}([0,T])$ for all $s\geq 1$). Moreover, trivially, $L^{1,2}\hookrightarrow L^1([0,T]; H^{-1})$. Hence, $\tilde{X} \hookrightarrow L^1([0,T]; H^{-1})$.   As a consequence,  $\partial_t f \in \tilde{\X} \subset L^1([0,T]; H^{-1})$  and then $f\in C([0,T];H^{-1})$. In particular 
$f(0)$ is well defined and $f(0)\in L^2$ since $f\in L^{\infty,2}$.
Then, again by Corollary \ref{c:UGamma} we have
\[
\| \partial_t \Ga f\|_\X \leq c( \|\partial_t f\|_{\tilde{\X}}  +\| f(0)\| )\leq c\| f\|_{\tilde{\Z}}.
\]
Notice also that we have
\beq\label{identity2}
\HH \Ga f = i (\partial_t \Ga f - f),
\eeq hence, 
\[
\| \HH \Ga f \|_{\infty,2} \leq \| \partial_t \Ga f  \|_{\infty,2} + \|f  \|_{\infty,2} \leq \|f\|_{\tilde{Z}},
\]
and the proof of \eqref{due} is complete.
\end{proof}
 
\section{Well Posedness\label{s:wp}}
We want to study strong solutions of the Cauchy problem for the abstract NLS equation
\begin{equation} \label{cauchy}
\left\{\begin{aligned}
i \partial_t\psi (t)  &= {\HH} \psi (t) + F(\psi)(t) \\
 \psi (0)  &=  \psi_0\in \DD 
\end{aligned} \right.
\end{equation}
where $F(\psi) =\pm|\psi|^{p-1}\psi$.\\
By strong solution of \eqref{cauchy} we mean a function  $\psi\in C([0,T];\DD) \cap C^1([0,T];L^2)$ which satisfies the equation and the initial value as $L^2$ identities.\\
Through the Duhamel formula we replace the differential equation with its integral version.
More explicitly, we formulate the subsequent proposition. The proof straightforwardly follows the lines of the standard situation and we omit it (see section 4.1 in \cite{cazhar} for a detailed analysis in the abstract setting).
\begin{prop}
A function $\psi\in C([0,T];\DD) \cap C^1([0,T];L^2)$ is a strong solution of \eqref{cauchy} if and only if it solves in $L^2(\RE^n)$ for every $t\in [0,T]$ the integral equation
\begin{equation*}
\psi(t)=U(t)\psi_0-i\int_0^t\ U(t-s)F(\psi)(s)\ ds .
\end{equation*}
\end{prop}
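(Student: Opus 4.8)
The plan is to base both implications on a single computation that holds for \emph{every} function in the stated regularity class. Namely, for any $\psi\in C([0,T];\DD)\cap C^1([0,T];L^2)$ and any fixed $t$, the map $s\mapsto U(t-s)\psi(s)$ is $C^1$ on $[0,t]$ with values in $L^2$, and
\[
\frac{d}{ds}\,U(t-s)\psi(s) = U(t-s)\big(i\HH\psi(s)+\partial_s\psi(s)\big).
\]
Here I would use that $\psi(s)\in\DD=D(\HH)$ for every $s$, so that $\tau\mapsto U(\tau)\psi(s)$ is differentiable with $\frac{d}{d\tau}U(\tau)\psi(s)=-i\HH U(\tau)\psi(s)$ (whence the $+i\HH$ above, by the chain rule applied to $\tau=t-s$), while $\psi\in C^1([0,T];L^2)$ furnishes the derivative of the second factor; the product rule in the $L^2$-valued setting then gives the displayed formula, using that $U$ commutes with $\HH$. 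Integrating over $[0,t]$ and using $U(0)=I$ produces the master identity
\[
\psi(t)-U(t)\psi(0) = \int_0^t U(t-s)\big(i\HH\psi(s)+\partial_s\psi(s)\big)\,ds,
\]
valid regardless of whether $\psi$ solves any equation.

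First I would treat the direction strong solution $\Rightarrow$ Duhamel. If $\psi$ solves \eqref{cauchy}, then $\partial_s\psi=-i\HH\psi-iF(\psi)$, so that $i\HH\psi+\partial_s\psi=-iF(\psi)$ and the master identity collapses, using $\psi(0)=\psi_0$, to exactly
\[
\psi(t)=U(t)\psi_0-i\int_0^t U(t-s)F(\psi)(s)\,ds.
\]

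For the converse I would suppose $\psi$ has the stated regularity and satisfies the Duhamel equation. Evaluation at $t=0$ gives $\psi(0)=\psi_0$. Subtracting the Duhamel equation from the master identity and cancelling the common term $U(t)\psi_0$ leaves
\[
\int_0^t U(t-s)\,g(s)\,ds = 0 \qquad \text{for all } t\in[0,T],
\]
where $g(s):=i\HH\psi(s)+\partial_s\psi(s)+iF(\psi)(s)$ lies in $C([0,T];L^2)$. Applying the isometry $U(-t)$ and using the group law $U(-t)U(t-s)=U(-s)$ turns this into $\int_0^t U(-s)g(s)\,ds=0$ for all $t$; since $s\mapsto U(-s)g(s)$ is $L^2$-continuous, the integral is $C^1$ in $t$ and differentiation yields $U(-t)g(t)=0$, hence $g\equiv 0$. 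This is precisely $i\partial_t\psi=\HH\psi+F(\psi)$, so $\psi$ is a strong solution.

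The main obstacle is the rigorous justification of the product-rule identity for $U(t-s)\psi(s)$, where both factors depend on $s$ and the unbounded generator $\HH$ intervenes: the argument requires the joint $L^2$-continuity of $(s,\tau)\mapsto U(\tau)\HH\psi(s)$, which follows from $\psi\in C([0,T];\DD)$ (so that $s\mapsto\HH\psi(s)$ is $L^2$-continuous, by \eqref{action1}) together with strong continuity of the unitary group. A subordinate point, needed so that $g\in C([0,T];L^2)$, is that $F(\psi)=\pm|\psi|^{p-1}\psi\in C([0,T];L^2)$; this uses the embedding $\DD\hookrightarrow L^{2p}$, available in the admitted ranges (in particular $p<3/2$ when $n=3$, so that $2p<3$, cf. \eqref{DDalpha-embedding}), to get $F(\psi)(s)\in L^2$ pointwise, and a local Lipschitz bound for the nonlinearity to promote pointwise membership to continuity in time.
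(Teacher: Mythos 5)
Your proof is correct and is precisely the standard argument the paper has in mind when it omits the proof with a reference to Section 4.1 of Cazenave--Haraux: one differentiates $s\mapsto U(t-s)\psi(s)$ (justified by $\psi\in C([0,T];\DD)\cap C^1([0,T];L^2)$ and the unitary group structure), integrates to get the master identity, and reads off both implications, with the converse reduced to $\int_0^t U(-s)g(s)\,ds=0$ and differentiation. Your attention to the two genuine technical points --- rigor of the product rule with the unbounded generator, and $F(\psi)\in C([0,T];L^2)$ via the embedding $\DD\hookrightarrow L^{2p}$ (which is exactly \eqref{DDalpha-embedding} together with the local Lipschitz bound of Proposition \ref{p:banale}) --- is consistent with how the paper handles these facts elsewhere.
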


We will often refer to the integral version in the following shortened form
\be \label{eq}
\psi=U\psi_0 -i \Ga F(\psi) ,
\ee
where $U$ and $\Gamma$ are defined in \eqref{evol} and \eqref{duham}.\\
The main result we want to prove in this section is 
\begin{teo} [Local Well-Posedness in $D(\HH)$]\label{local}
Assume that $p\geq 1$ if $n=2$ and $1\leq p<3/2$ if $n=3$ and let $\psi_0 \in \DD$. Then the following properties hold true.\\
1) There exists $T\in(0,1]$ and a solution of \eqref{eq}
in $C([0,T];\DD) \cap C^1([0,T];L^2)$.\\
2) The solution $\psi\in W^{1,r}((0,T);L^{p+1})$, where $r$ is such that $(r,p+1)$ is admissible (as in Def. \ref{d:rp}).\\
3) The solution enjoys unconditional uniqueness in $C([0,T];\DD)$.\\
4) There is continuous dependence on initial data, in the following sense. Let $\psi_0^n\to\psi_0$ in $\DD$; then denoted as $\psi$ and $\psi^n$ the solutions of \eqref{eq} corresponding to initial data $\psi_0$ and $\psi^n_0$, one has $\psi^n\to\psi$ in $C([0,T];\DD)$.\\
5) The following blow-up alternative holds. Let the maximal existence time be defined as 
\[T^*=\sup_{T>0}\left\{\psi\in C([0,T],\DD)\cap C^1([0,T], L^2) \ \ \text{solves}\ \ \eqref{eq} \right\};\]
then 
\[\lim_{t\to T^*} \| \psi(t)\|_{\DD}<\infty \ \ \Longrightarrow\  \ T^*=\infty .\]
\end{teo}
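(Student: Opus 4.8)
The plan is to solve the Duhamel equation \eqref{eq}, $\psi=U\psi_0-i\Ga F(\psi)$ with $F(\psi)=\pm|\psi|^{p-1}\psi$, by a fixed point argument of Kato type. Throughout I would use the pointwise bounds $|F(u)|\leq C|u|^{p}$, $|F(u)-F(v)|\leq C(|u|^{p-1}+|v|^{p-1})|u-v|$ and $|\partial_t F(\psi)|\leq C|\psi|^{p-1}|\partial_t\psi|$, valid for $p\geq 1$. First I would run the contraction for $\Phi(\psi)=U\psi_0-i\Ga F(\psi)$ in the metric of $\X$, on a ball on which the energy-level norm $\|\psi\|_{L^{\infty}([0,T];\DD^{1/2})}$ is kept under control. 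By Corollary \ref{c:UGamma} one has $\|\Phi(\psi)-\Phi(\chi)\|_{\X}\leq c\|F(\psi)-F(\chi)\|_{\tilde{\X}}$, and estimating the right-hand side in the Strichartz norm $L^{r',1+1/p}$ by Hölder in space, Hölder in time, and the adapted Gagliardo--Nirenberg inequalities \eqref{adaptedGN} yields a bound $c\,T^{\theta}(\|\psi\|^{p-1}+\|\chi\|^{p-1})\|\psi-\chi\|_{\X}$ with $\theta>0$. The decisive point is that these nonlinear estimates are available precisely in the stated range: $F(\psi)\in L^2$ requires $\psi\in L^{2p}$, whose Sobolev exponent is $s_c=n(\tfrac12-\tfrac1{2p})$, and for $n=2$ one has $s_c=1-\tfrac1p<1$ for \emph{every} $p$, while for $n=3$ the embedding $\DD^{1/2}\hookrightarrow L^{2p}$ forces $2p<3$, i.e. $p<3/2$. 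The contraction then produces a solution with $\psi,\,\partial_t\psi\in\X$ and $\psi\in L^\infty([0,T];\DD^{1/2})$, and assertion 2) follows at once since $\partial_t\psi\in\X\subset L^{r,p+1}$ gives $\psi\in W^{1,r}((0,T);L^{p+1})$.

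The step I expect to be the main obstacle is upgrading this to the full operator-domain regularity $\HH\psi\in L^{\infty,2}$, i.e. $\psi\in C([0,T];\DD)$. One cannot commute $\HH$ through $\Ga$, because $F(\psi)\notin\DD$; this is exactly why one needs the spaces $\Z,\tilde{\Z}$ and the identities \eqref{identity1}--\eqref{identity2}. I would differentiate \eqref{eq} in time: $w=\partial_t\psi$ solves a \emph{linear} integral equation with source $\partial_t F(\psi)$ and data $w_0=-i(\HH\psi_0+F(\psi_0))\in L^2$. Estimating it in $\X$ through Corollary \ref{c:UGamma} and the bound $|\partial_t F(\psi)|\leq C|\psi|^{p-1}|w|$ gives a \emph{closed} inequality $\|w\|_{\X}\leq c\|w_0\|+c\,T^{\theta}\|\psi\|^{p-1}_{L^\infty(\DD^{1/2})}\|w\|_{\X}$, which closes for small $T$ because the coefficient only involves the already controlled $\DD^{1/2}$-norm. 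Reading $\HH\psi=i\,\partial_t\psi-F(\psi)$ off the equation and using $\|F(\psi)\|_{\infty,2}\leq c\|\psi\|^{p}_{L^\infty(\DD^{1/2})}$ — again from \eqref{adaptedGN}, since $p\,s_c=p-1$ — then bounds $\HH\psi$ in $L^{\infty,2}$. The structural reason all powers are admissible in $n=2$ is that this top-order term is tamed by the \emph{subordinate} norm $\DD^{1/2}$, not by $\DD$ itself, so no superlinear a priori inequality in the top order has to be closed; combined with Proposition \ref{gatto} (\eqref{uno}--\eqref{due}) this places $\psi$ in $\Z\hookrightarrow C([0,T];\DD)\cap C^1([0,T];L^2)$ and completes 1).

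The remaining assertions follow by now-standard arguments. For 3), two solutions $\psi,\chi\in C([0,T];\DD)$ with the same datum obey $\psi-\chi=-i\Ga(F(\psi)-F(\chi))$, and the contraction estimate in the $\X$-metric — with the $L^{p+1}$-factors supplied by the embedding \eqref{DDalpha-embedding} of $\DD$, so that no auxiliary Strichartz space has to be postulated, whence uniqueness is \emph{unconditional} — gives $\psi=\chi$ first on a short interval and then on all of $[0,T]$. For 4), applying the same difference estimates to solutions with data $\psi_0^n\to\psi_0$ yields $\|\psi^n-\psi\|_{\X}\leq c\|\psi_0^n-\psi_0\|+(\text{small})\,\|\psi^n-\psi\|_{\X}$, hence $\X$-convergence; convergence in $C([0,T];\DD)$ is then recovered by propagating the convergence through $\DD^{1/2}$ and, via the differentiated equation above, through $\DD$. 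Finally 5) is the usual continuation argument: the existence time is bounded below by a decreasing function of $\|\psi_0\|_{\DD}$ (and of the controlled lower-order quantities), so a solution whose $\DD$-norm stayed bounded as $t\to T^{*}$ could be restarted near $T^{*}$ and continued past it, contradicting maximality; thus $T^{*}<\infty$ forces $\|\psi(t)\|_{\DD}\to\infty$.
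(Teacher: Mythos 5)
Your treatment of parts 3)--5) matches the paper's, and your second-paragraph observation that the top-order regularity must come from time differentiation plus the identity $\HH\psi = i\partial_t\psi - F(\psi)$ is exactly the mechanism the paper uses. The genuine gap is in your existence step: you run the contraction ``on a ball on which $\|\psi\|_{L^\infty([0,T];\DD^{1/2})}$ is kept under control,'' but you never show that $\Phi$ preserves such a ball, and none of the tools you invoke can do it. To bound $\|\Ga F(\psi)\|_{L^\infty([0,T];\DD^{1/2})}$ directly you would have to commute $(\HH+\la)^{1/2}$ through the Duhamel integral onto $F(\psi)$, i.e.\ you would need $F(\psi)(s)\in\DD^{1/2}$ with a quantitative bound --- and this fails for exactly the reason you yourself cite at the $\DD$ level: there is no chain rule in the scale $\DD^{s}$, and $F$ does not map $\DD^{1/2}$ into $\DD^{1/2}$. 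Concretely, for $\psi=\phi^\la+qG^\la$ with $q\neq 0$, near the origin $F(\psi)$ behaves like $x^{-p}$ when $n=3$ (like $|\log x|^{p}$ when $n=2$), which is not of the form $H^1_{\mathrm{loc}}+\ci\, G^\la$; so there is no Strichartz or energy estimate at the form-domain level with which to close the self-map.

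Your proposed remedy is then circular. The ``closed'' inequality for $w=\partial_t\psi$ in your second paragraph carries the coefficient $\|\psi\|_{L^\infty(\DD^{1/2})}^{p-1}$, i.e.\ it presupposes precisely the ball whose invariance is missing from step one; moreover, writing the integral equation for $w$ at all presupposes that $\partial_t\psi$ exists in $\X$, which a bare $\X$-contraction does not provide (this would require either a difference-quotient argument, or solving the \emph{linear} equation for $w$ by its own fixed point and then identifying $w=\partial_t\psi$ through uniqueness --- neither is addressed). The paper breaks this circle by putting the time derivative and $\HH v$ inside the fixed-point set from the start: the ball is taken in $\Z$, so that $\|\partial_t v\|_{\X}$ and $\|\HH v\|_{\infty,2}$ are constraints rather than conclusions; this ball, with the initial-value condition $v(0)=\psi_0$, is shown to be complete under the weak $\X$-metric by weak-$*$ compactness; and the self-map is supplied by Proposition \ref{cane} ($F:\Z\to\tilde{\Z}$, which needs the H\"older continuity in $\DD^{s/2}$ and the subtraction of $F(v)(0)$) together with Proposition \ref{gatto} ($\Ga:\tilde{\Z}\to\Z$, resting on the identities \eqref{identity1}--\eqref{identity2}). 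If you want to keep your two-step organization you must either enlarge your ball to the paper's $\Z$-ball or run the fixed point for the coupled system $(\psi,w)$; as written, the existence part of the argument does not close.
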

\begin{remark}\label{distributional}
Let be $\psi\in\DD(\HH)$ a solution of \eqref{cauchy}, and let be $\zeta\in C^{\infty}((0,T)\times \RE^n)$ a test function. The function $\psi$ can be considered a (regular) distribution. Testing against $\zeta$ the distribution
\begin{equation*}
i \partial_t\psi  + \Delta \psi   \mp|\psi|^{p-1}\psi  \\
\end{equation*}
and recalling that $\psi=\phi^\la   + q   \,  G^\lambda$ and $(-\Delta+\lambda)G^\la=\delta_0$, we obtain 
\begin{align*}
&\langle  i \partial_t\psi, \zeta\rangle  -  \langle\psi,(-\Delta+\lambda) \zeta\rangle +\lambda \langle\psi, \zeta\rangle \mp\langle |\psi|^{p-1}\psi,\zeta\rangle \\
= &\langle  i \partial_t\psi, \zeta\rangle  -  \langle  \phi^\la   + q   \,  G^\lambda,(-\Delta+\lambda) \zeta\rangle +\lambda \langle\psi, \zeta\rangle \mp\langle |\psi|^{p-1}\psi,\zeta\rangle\\
= &\langle  i \partial_t\psi, \zeta\rangle  -  \langle (-\Delta+\lambda) \phi^\la, \zeta \rangle  - \langle q\delta_0, \zeta\rangle +\lambda \langle\psi, \zeta\rangle \mp\langle |\psi|^{p-1}\psi, \zeta\rangle \\
= &\langle  i \partial_t\psi, \zeta\rangle  -  \langle (\HH+\lambda) \psi, \zeta \rangle  - \langle q\delta_0, \zeta\rangle +\lambda \langle\psi, \zeta\rangle \mp\langle |\psi|^{p-1}\psi, \zeta\rangle\\
= &\langle i\partial_t\psi -  \HH\psi  \mp |\psi|^{p-1}\psi, \zeta\rangle - \langle q\delta_0, \zeta\rangle=- \langle q\delta_0, \zeta\rangle\ .
\end{align*}
This means that a strong solution of \eqref{cauchy} solves as a distribution the NLS equation with a Dirac delta source
\begin{equation*}
i \partial_t\psi   =  -\Delta \psi   \pm|\psi|^{p-1}\psi - q\delta_0\ . \\
\end{equation*}
Notice that at this level the special form of $q=q(t)$ given by the boundary condition is not important.
\end{remark}

\begin{remark}
The case $p=1$ corresponds to the linear equation and it is well known. Hence in  the forthcoming analysis we shall always assume $p>1$.
\end{remark}
\begin{remark}
The presence of $T\in(0,1]$ in part $1)$ of the statement of the Theorem \ref{local} is only due to avoiding constants depending on the existence time in the many bounds appearing in the proof. This limitation is inessential as regards local existence (see also remark \ref{T_0}).
\end{remark}
\begin{remark} According to a usual and convenient strategy, we will prove existence and uniqueness of solution of the integral equation \eqref{eq} in weaker norms than the ones stated and then the further regularity will follow from the equation.
\end{remark}
We split the proof of the local well posedness for strong solutions in separate subsections.
\subsection{Local Existence and conditional uniqueness}
The first proposition collects some simple and useful properties of the nonlinearity $F$ used often in the subsequent analysis; the first two are well known, while the third is specific of the present problem.
\begin{prop}\label{p:banale} Let $q\geq p\geq 1$ and consider the map $v\mapsto F(v)=\pm |v|^{p-1}v$. Then the following holds true.\\
1) $F:L^q\to L^{q/p}$ is continuous and 
\[
\|F(v)\|_{q/p}\leq C\|v\|_q^p
\]
2) F is continuously differentiable in the real sense and its derivative at the point $v\in L^q$ is given by
\beq\label{derivata1}
F'(v)w=|v|^{p-1}w+(p-1)|v|^{p-3}v \Re(v \overline w)\qquad   \forall v, w\in L^q	 .
\eeq
Moreover the derivative map satisfies the bounds
\begin{equation}\label{derivata2}
\|F'(v)\|_{L^q\to L^{q/p}}\leq C\| v\|_q^{p-1} \qquad \text{and} \qquad \|F'(v)w\|_{q/p}\leq C\| v\|_q^{p-1}\|w\|_q\ .
\end{equation}
3) Let $p> 1$ if $n=2$, $1< p<3/2$ if $n=3$ and let $v \in \DD$. Then
\be \label{banale}
\| F(v) \| \leq c \| v \|^p_{\DD} .
\ee
\end{prop}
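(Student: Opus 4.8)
The plan is to reduce the three statements to properties already established in Section~\ref{s:Prel}; only part~3) carries information specific to the point interaction, so I would dispatch 1) and 2) quickly and concentrate on \eqref{banale}.

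For 1) I would use the pointwise identity $|F(v)|=|v|^p$, which gives $|F(v)|^{q/p}=|v|^q$ and hence $\|F(v)\|_{q/p}=\|v\|_q^p$ with constant one; continuity of the Nemytskii map $v\mapsto|v|^{p-1}v$ between these Lebesgue spaces is standard. For 2) the formula \eqref{derivata1} is the Fr\'echet derivative of $|v|^{p-1}v$ regarded as a map of the real and imaginary parts, and the bounds \eqref{derivata2} follow from the pointwise estimate $|F'(v)w|\leq p\,|v|^{p-1}|w|$ together with H\"older's inequality; both are classical (cf.\ \cite{cazhar}).

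The substance is part~3). The key observation is again $|F(v)|=|v|^p$, which yields
\[
\|F(v)\|^2=\int_{\RE^n}|v|^{2p}\,\d\x=\|v\|_{2p}^{2p},\qquad\text{i.e.}\qquad \|F(v)\|=\|v\|_{2p}^{\,p}.
\]
Thus \eqref{banale} is equivalent to the continuous embedding $\DD\hookrightarrow L^{2p}(\RE^n)$, namely $\|v\|_{2p}\leq c\,\|v\|_{\DD}$. Since $p>1$ gives $2p>2$, and $p<3/2$ (only needed for $n=3$) gives $2p<3$, the exponent $2p$ lies in the admissible window of \eqref{DDalpha-embedding} in both dimensions ($2<2p<\infty$ for $n=2$, $2<2p<3$ for $n=3$); the inequality then follows at once, and raising to the power $p$ gives \eqref{banale}.

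If one wants the mechanism behind the constraint to be explicit rather than hidden inside \eqref{DDalpha-embedding}, I would instead split $v=\phi^\lambda+q\,G^\lambda$ as in \eqref{DD} and bound the two summands. The regular part is controlled by the Sobolev embedding \eqref{r:embedding}, $\|\phi^\lambda\|_{2p}\leq c\,\|\phi^\lambda\|_{H^2}$, and the coefficient by $|q|=|\Lambda\,\phi^\lambda(\0)|\leq c\,\|\phi^\lambda\|_{L^\infty}\leq c\,\|\phi^\lambda\|_{H^2}$ using $H^2\hookrightarrow C_B$; since $\|v\|_{\DD}$ is equivalent to $\|\phi^\lambda\|_{H^2}$, it only remains to know $\|G^\lambda\|_{2p}<\infty$. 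This is exactly the point where the dimensional restriction bites --- and the only genuinely nontrivial ingredient --- because in three dimensions $G^\lambda(\x)\sim|\x|^{-1}$ near the origin, so $G^\lambda\in L^{2p}(\RE^3)$ forces $2p<3$, i.e.\ $p<3/2$, whereas in two dimensions the logarithmic singularity of $G^\lambda$ lies in every $L^{2p}$ with $2p<\infty$. Everything else is a routine application of the embeddings recorded earlier.
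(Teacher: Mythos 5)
Your proposal is correct and follows essentially the same route as the paper: parts 1) and 2) are dispatched as standard Nemytskii-operator facts, and for part 3) your ``explicit'' variant --- splitting $v=\phi^\lambda+q\,G^\lambda$, using $H^2\hookrightarrow L^{2p}\cap C_B$ for the regular part and $|G^\lambda|^p\in L^2(\RE^n)$ for the admitted powers --- is precisely the paper's argument. Your shortcut of first writing $\|F(v)\|=\|v\|_{2p}^p$ and invoking the recorded embedding \eqref{DDalpha-embedding} is only a repackaging of that same computation, so there is no substantive difference.
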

\begin{proof}[{\bf Proof}]
The fact that $F:L^q\to L^{q/p}$ is an easy check, as it is formula \eqref{derivata1} by using the formula  $F'(v)w = \frac{d}{ds}F(v+sw)|_{s=0}$. Continuity and differentiability are well known properties of the Nemitskii operator $v\mapsto F(v)$ (see for example \cite{AM07}, Section 1.3 and \cite{K2}, Section 4).
Concerning 3), notice that, by the definition of $\DD$, $v = \phi^\lambda + \phi^\lambda(\0)\, G^\lambda$, hence 
\[
|F(v) | \leq c \big(|\phi^\la|^{p} + |\phi^\la (\0)|^{p} \, |G^\lambda|^{p} \big).
\]
By \eqref{r:embedding}, $\|\,|\phi^\lambda|^p\,\| = \|\phi^\lambda\,\|_{2p}^p  \leq c \| \fila \|_{H^2}^p$  and $\| \fila \|_{\infty} \leq c \| \fila \|_{H^2}$. Moreover 
$|G^\lambda|^{p}\in L^2(\RE^n)$ for the considered range of $p$. Hence, recalling that $\|v\|_{\DD} = \|\phi^\lambda\|_{H^2}$,  \eqref{banale} immediately
follows.
\end{proof}
\begin{remark}
We will often use property $2)$ of Proposition \ref{p:banale} in the case $q=p+1$, obtaining 
\[
\|F'(v)\|_{L^{p+1}\to L^{1+1/p}}\leq C\| v\|_{p+1}^{p-1}\ \ \ \ \text{and}\ \ \ \ \|F'(v)u\|_{1+1/p}\leq C\| v\|_{p+1}^{p-1}\|u\|_{p+1} .
\]
\end{remark}
\begin{remark}
Validity of Proposition \ref{p:banale} is not restricted to the pure power nonlinearity. If the function $F:\CO\to \CO$ defining the nonlinearity satisfies the bounds
\[
|F(z)|\leq C|z|^p \ \ \ \ \text{and}\ \ \ \ \ |F'(z)|\leq C|z|^{p-1}\ \ \ \ \ \ \ \ \ z\in \CO 
\]
then Proposition \ref{p:banale} still holds true. The proof is analogous to the one given above for part 1) and for 2) and 3) see \cite{K2}, Section 4).
\end{remark}
\vskip5pt
\noindent
In the following Proposition recall that $r= \frac{4(p+1)}{n(p-1)}$ (see Def. \ref{d:rp}) and that $r>2$. 

\begin{prop} \label{cane} Assume that $p>1$ if $n=2$ and $1< p<3/2$ if $n=3$. Set $\beta = \frac2r$. We have $F:\Z \to \tilde\Z$ and for $T\leq 1$ there holds true:
\[
\| F(v)-F(v)(0) \|_{\tilde \Z} \leq c T^{1-\beta} \|v\|^p_\Z \qquad \forall v\in \Z. 
\]
\end{prop}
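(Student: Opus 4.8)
The plan is to unwind the definition of the $\tilde\Z$-norm and to estimate separately the two quantities $\|F(v)-F(v)(0)\|_{\infty,2}$ and $\|\partial_t\big(F(v)-F(v)(0)\big)\|_{\tilde\X}$. Since $F(v)(0)=F(v(0))$ is independent of time, the second equals $\|\partial_t F(v)\|_{\tilde\X}$, and by the chain rule of Proposition \ref{p:banale}(2) one has $\partial_t F(v)=F'(v)\,\partial_t v$. Throughout I use that $v\in\Z$ forces $v\in L^\infty([0,T];\DD)$ with $\|v(t)\|_\DD\le c\|v\|_\Z$ and $\partial_t v\in\X$, and that the embedding \eqref{DDalpha-embedding} (valid for $p+1<3$ when $n=3$, which is implied by $p<3/2$, and for all $p$ when $n=2$) gives $\|v(t)\|_{p+1}\le c\|v\|_\Z$.

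The time-derivative estimate is the clean one and already produces the factor $T^{1-\beta}$. Using Proposition \ref{p:banale}(2) with $q=p+1$,
\[
\|F'(v(s))\partial_s v(s)\|_{1+1/p}\le c\|v(s)\|_{p+1}^{p-1}\|\partial_s v(s)\|_{p+1}\le c\|v\|_\Z^{p-1}\|\partial_s v(s)\|_{p+1},
\]
I take the $L^{r'}$ norm in time. Since $r'<r$, the embedding $L^{r}([0,T])\hookrightarrow L^{r'}([0,T])$ gives a factor $T^{1/r'-1/r}=T^{1-2/r}=T^{1-\beta}$, while $\|\partial_t v\|_{r,p+1}\le\|v\|_\Z$. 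Hence $\|\partial_t F(v)\|_{r',1+1/p}\le cT^{1-\beta}\|v\|_\Z^{p}$, which controls the $\tilde\X$-norm by taking the summand $h=\partial_t F(v)$, $g=0$ in $\tilde\X=L^{1,2}+L^{r',1+1/p}$.

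The $L^{\infty,2}$ estimate is the delicate part, and this is where I expect the main obstacle. Subtracting $F(v)(0)$ is precisely what makes smallness in $T$ possible, since $F(v)$ itself is only bounded (not small) in $L^{\infty,2}$ by Proposition \ref{p:banale}(3). Writing $F(v)(t)-F(v)(0)=\int_0^t F'(v)\partial_s v\,ds$ and decomposing $v=\phi^\lambda+qG^\lambda$, the \emph{regular} contribution $|\phi^\lambda|^{p-1}\partial_s v$ is harmless: it lies in $L^2$ with norm $\le c\|\phi^\lambda\|_\infty^{p-1}\|\partial_s v(s)\|_2\le c\|v\|_\Z^{p-1}\|\partial_s v(s)\|_2$, and integrating in time yields a factor of order $T\le T^{1-\beta}$. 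The genuine difficulty is the \emph{singular} contribution $|G^\lambda|^{p-1}\partial_s v$: because $\partial_s v$ is only known to lie in $L^2\cap L^{p+1}$ and carries no compensating decay at the origin, a Hölder split forces $G^\lambda\in L^{2(p+1)}$, which fails for $n=3$ since $2(p+1)\ge 3$. Thus this term is not in $L^2$ pointwise in $t$ and the naive pointwise FTC bound breaks down.

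My plan to circumvent this is to avoid the pointwise $L^2$ bound and instead combine two \emph{global} estimates for $w:=F(v)-F(v)(0)$ by interpolation in the space variable. The same Hölder bound as above gives, through the FTC, $\|w(t)\|_{1+1/p}\le\int_0^t\|F'(v)\partial_s v\|_{1+1/p}\,ds\le cT^{1/r'}\|v\|_\Z^{p}$, i.e.\ smallness in the lower exponent $1+1/p<2$; while Proposition \ref{p:banale}(3) together with $\DD\hookrightarrow L^{q_0}$ for some $q_0\in(2,3/p)$ (such $q_0$ exists exactly because $p<3/2$ when $n=3$; any finite $q_0$ works when $n=2$) gives the uniform bound $\|w(t)\|_{q_0}\le c\|v\|_\Z^{p}$. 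Interpolating $L^2$ between $L^{1+1/p}$ and $L^{q_0}$ produces $\|w(t)\|_{2}\le cT^{\gamma}\|v\|_\Z^{p}$ with a positive exponent $\gamma=(1-\theta)/r'$, uniformly in $t$, which combined with $T\le 1$ and the derivative estimate closes the bound. A more structural alternative for the singular term is to show the coefficient is Hölder in time, $|q(t)-q(0)|\le cT^{a}\|v\|_\Z$, by extracting $q$ from $q\,\delta_\0=(-\Delta+\lambda)v-(\HH+\lambda)v$, pairing against a smooth $\zeta$ with $\zeta(\0)=1$, transferring a small fractional power $(\HH+\lambda)^{a}$ onto $\zeta$ (legitimate by Proposition \ref{frazio}), and using \eqref{interpolationgeneral} with the $L^2$-smallness $\|v(t)-v(0)\|_2\le T\|v\|_\Z$; the leading singular term of $w$ then carries this small coefficient times $|G^\lambda|^{p}\in L^2$ (here $2p<3$ is essential), the mixed terms being controlled by a $\|G^\lambda\|^{p-1}$-Hölder estimate against $\phi^\lambda(t)-\phi^\lambda(0)$. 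The hard part throughout is this singular contribution, which must be treated by cancellation/interpolation rather than by a direct $L^2$ bound; identifying the precise power of $T$ in the $L^{\infty,2}$ estimate with $1-\beta$ is the most technical point, the derivative estimate already furnishing $T^{1-\beta}$ cleanly.
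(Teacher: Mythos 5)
Your treatment of the time-derivative part is essentially the paper's: the product estimate $\|F'(v)\partial_t v\|_{1+1/p}\le c\|v\|_{p+1}^{p-1}\|\partial_t v\|_{p+1}$ followed by H\"older in time gives exactly the factor $T^{1-\beta}$, as in the paper (the paper justifies the identity $\partial_t F(v)=F'(v)\partial_t v$ via the Lipschitz characterization of vector-valued Sobolev functions, Theorem 1.4.40 of \cite{cazhar}, rather than a bare chain rule, but that is a technicality). You also correctly identify the genuine obstruction in the $L^{\infty,2}$ part: for $n=3$ the integrand $F'(v)\partial_t v$ need not lie in $L^2$, since a H\"older split against $\partial_t v\in L^{p+1}$ would require $G^\lambda\in L^{2(p+1)}$, which fails.

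However, your main route around this obstruction does not prove the stated inequality. Interpolating $w=F(v)-F(v)(0)$ between the $L^{1+1/p}$ bound (which carries $T^{1/r'}$) and the uniform $L^{q_0}$ bound (which carries no power of $T$ at all) yields $\|w\|_{\infty,2}\le cT^{\gamma}\|v\|_\Z^p$ with $\gamma=(1-\theta)/r'$, where $\tfrac12=\tfrac{(1-\theta)p}{p+1}+\tfrac{\theta}{q_0}$; for $n=3$ this exponent is \emph{strictly smaller} than $1-\beta$ for every admissible $q_0<3/p$ and every $p\in(1,3/2)$. Concretely, at $p=7/5$, even in the limit $q_0\to 3/p$ one finds
\[
\gamma=\frac{(3-2p)(p+7)}{8p(2-p)}=\frac14, \qquad\text{while}\qquad 1-\beta=\frac{5-p}{2(p+1)}=\frac34 .
\]
So your argument yields only \emph{some} positive power of $T$ (enough for the fixed-point application, but not the proposition as stated); you acknowledge this but leave it unresolved, and your alternative ``structural'' argument via H\"older continuity of $q(t)$ is too sketchy to assess, its exponent never being identified. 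The missing idea, which is how the paper closes the gap, is to interpolate one level lower, on $v$ itself rather than on $F(v)$: since $\partial_t v\in L^{\infty,2}$ gives $\|v(t)-v(0)\|\le t\,\|v\|_\Z$ and $v$ is bounded in $\DD$, the interpolation inequality \eqref{interpolationgeneral} gives $\|v(t)-v(0)\|_{\DD^{s/2}}\le c\,t^{1-s/2}\|v\|_\Z$; choosing $s=\frac{n(p-1)}{2p}$ (which is $<1/2$ when $n=3$, $p<3/2$, so the embedding $\DD^{s/2}\hookrightarrow H^s\hookrightarrow L^{2p}$ of \eqref{modifiedembeddings} applies) and using $\|f^{p-1}g\|\le\|f\|_{2p}^{p-1}\|g\|_{2p}$ gives $\|F(v)-F(v)(0)\|_{\infty,2}\le cT^{1-s/2}\|v\|_\Z^p$, with $1-\tfrac{s}{2}>1-\tfrac{2}{r}$ precisely because $\frac{n(p-1)}{4p}<\frac{n(p-1)}{2(p+1)}$ for $p>1$. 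In short: the smallness in $T$ must be extracted \emph{before} applying the nonlinearity, in a norm ($L^{2p}$ via $\DD^{s/2}$) strong enough to absorb the singularity; extracting it afterwards, as you do, irretrievably dilutes the exponent.
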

\begin{proof}[{\bf Proof}]
We prove first that $\partial_tF(v) \in \tilde \X$. To this aim we shall prove that   
\[
F(v) \in W^{1,r'}((0,T);L^{1+1/p}),
\]
which implies the claim.  Note that $v\in \Z$ implies $v\in L^{\infty}([0,T],\DD)$, hence, by the embedding \eqref{DDalpha-embedding}, there holds $  \|v\|_{\infty, p+1} \leq c \|v\|_\Z$. Moreover, $\|F(v)\|_{1+1/p} = \|v\|_{p+1}^p$. If $p<r-1$, by H\"older inequality, one has  $\|F(v)\|_{r', 1+1/p}\leq T^{\frac{r-1-p}{r}}\|v\|_{r,p+1}^p$, while for $p\geq r-1$ one obtains $\|F(v)\|_{r', 1+1/p}\leq c \|v\|_\Z^{p-r+1}\|v\|_{r,p+1}^{r-1}$. So, $\|F(v)\|_{r', 1+1/p}\leq c \|v\|_\Z^{p}$, and $F(v)\in  L^{r'}((0,T);L^{1+1/p})$. To proceed, note that 
 \begin{equation}\label{harbour}
  | F(v)(t',x)  -  F(v)(t,x)|  \leq c ( |v(t',x)|^{p-1} + |v(t,x)|^{p-1} )\, | v(t',x) - v(t,x)|.
 \end{equation}
 Taking into account that $\| f^{p-1} g\|_{1+1/p} \leq \|f\|_{p+1}^{p-1}\|g\|_{p+1}$, we have
\begin{align*}
\big\| F(v)(t') - F(v)(t)\big\|_{1+1/p} & \leq c \big(  \|v(t')\|_{p+1}^{p-1} +  \|v(t)\|_{p+1}^{p-1}\big)\, \big\| v(t') - v(t)\big\|_{p+1} .
\end{align*}
Since $ \big\| v(t') - v(t)\big\|_{p+1} \leq \int_t^{t'} \|\partial_s v(s) \|_{p+1} ds$, setting $\varphi(s) = c \|v\|_{\infty,p+1}^{p-1}  \|\partial_s v(s) \|_{p+1}$ (for some constant $c$ large enough) one has 
\[
\big\| F(v)(t') - F(v)(t)\big\|_{1+1/p}  \leq \int_t^{t'} \varphi(s) ds
\]
for almost all $t,t' \in [0,T]$. By Theorem 1.4.40 in \cite{cazhar} it follows that $F\in W^{1,r'}((0,T);L^{1+1/p})$ and  $\|\partial_t F(v)\|_{r',1+1/p} \leq \| \varphi \|_{L^{r'}(0,T)}$. Additionally, by H\"older inequality in time, 
\[  \| \varphi \|_{L^{r'}(0,T)} \leq c  \|v\|_{\infty,p+1}^{p-1} T^{1-\beta}   \|\partial_t v \|_{r,p+1}\leq  c   T^{1-\beta}   \|v \|_{\Z}^p,\]
and  
\[
\big\| \partial_t \big(F(v)-F(v(0))\big)\big\|_{\tilde{\X}} =   \| \partial_t F(v)\|_{\tilde{\X}} \leq \|\partial_t F(v)\|_{r',1+1/p} \leq c   T^{1-\beta}   \|v \|_{\Z}^p. 
\]
 
Note that $\|F(v)\|_{\infty,2} \leq c \|v\|_\Z^p$ by Prop. \ref{p:banale}. Hence, $F : \Z \to \tilde \Z$. 

Next we prove that 
\begin{equation}\label{wild}
\| F(v)-F(v)(0) \|_{\infty,2} \leq c T^{1-\beta} \|v \|_\Z^p.
\end{equation}

By interpolation (see Eq. \eqref{interpolationgeneral}, with $a=0$, $b=1$, $\theta = s/2$, $s\in (0,2)$), it follows that 
\[
\| v(t) -v(t')\|_{\DD^{\frac{s}{2}}} \leq c   \| v(t) -v(t')\|_{\DD}^{\frac{s}{2}} \, \| v(t) -v(t')\|^{1-\frac{s}{2}},
\]
 for all $s\in(0,2)$. Since $\partial_t v\in L^{\infty,2}$, there holds true   $\| v(t) - v(t') \| \leq |t-t'| \|\partial_t v \|_{\infty,2}$ for all $t,t'\in[0,T]$. Moreover,  $v\in L^\infty ([0,T];\DD)$, hence, after a possibile modification on a set of measure zero,  $v$ is a bounded  mapping of $[0,T] \to \DD$ which satisfies the inequality 
\begin{equation}\label{notworried}
\| v(t) -v(t')\|_{\DD^{\frac{s}{2}}}  \leq c  |t-t'|^{1-\frac{s}{2}}  \| v \|_{\infty, \DD}^{\frac{s}{2}} \|\partial_t v \|_{\infty,2}^{1-\frac{s}{2}}\leq  c  |t-t'|^{1-\frac{s}{2}}  \| v \|_{\Z} \qquad s\in(0,2).
\end{equation}
The latter inequality implies that the  representative of $v\in \Z$ which is a bounded map from  $[0,T] \to \DD$ is a H\"older continuous map from  $[0,T] \to \DD^{\frac{s}2}$;  from now on we denote by $v(0)$ its  value  in $t=0$. The function $v(0)$ can be understood as function in $L^\infty([0,T],\DD^{\frac{s}2})$, independent on $t$.  
Setting $t' = 0$ in Eq. \eqref{notworried},  and taking the essential supremum we infer  
\begin{equation*}
\| v -v(0)\|_{\infty, \DD^{\frac{s}{2}}} \leq    c T^{1-\frac{s}2}\|v\|_{\Z} \qquad s\in(0,2).
\end{equation*}
Next we use the embeddings $\DD^{\frac{s}{2}} \hookrightarrow H^s \hookrightarrow L^{2p}$, see Eq. \eqref{modifiedembeddings}, with  $s = s_c(2p) = \frac{n(p-1)}{2p}$.    To proceed,  recall the inequality  \eqref{harbour} and 
notice that, by H\"older inequality,  it follows that 
\begin{equation}\label{semplice0}
\| f^{p-1} g\| \leq \|f\|^{p-1}_{2p} \|g\|_{2p}.
\end{equation}
Hence, 
\[\begin{aligned}
\| F(v) - F(v)(0) \|_{\infty,2} \leq  &c \big( \|v\|_{\infty, 2p}^{p-1} + \|v(0)\|_{\infty,2p}^{p-1} \big) \|v-v(0)\|_{\infty, 2p}
 \\ 
 \leq &  c \big( \|v\|_{\infty, \DD^{\frac{s}2}}^{p-1} + \|v(0)\|_{\infty,\DD^{\frac{s}2}}^{p-1} \big) \|v-v(0)\|_{\infty,\DD^{\frac{s}2}}\leq  c T^{1-\frac{s}2}\|v\|_{\Z}^p \leq c T^{1-\frac{2}r}\|v\|_{\Z}^p,
\end{aligned}\]
where in the latter inequality we used $1- \frac{s}2 > 1 - \frac{2}r $. This concludes the proof of inequality \eqref{wild}.
\end{proof}

\begin{remark}As pointed out in the proof of Prop. \ref{cane}, if $v \in \Z$ then one has $v \in C^{0,1-\frac{s}{2}}([0,T], \DD^{\frac{s}{2}})$, i.e., $v$ is a H\"older continuous map from $[0,T] \to \DD^{\frac{s}{2}}$. More precisely, there holds true 
\[v \in \Lip([0,T]; L^2) \cap C^{0,1-\frac{s}{2}} ([0,T];\DD^{\frac{s}{2}}) \subset \Lip([0,T]; L^2) \cap C^{0,1-\frac{s}{2}} ([0,T];L^{2p}).\]
 As a consequence of inequalities \eqref{harbour} and \eqref{semplice0} one has $F(v) \in C^{0,1-\frac{s}{2}} ([0,T];L^{2})$. 
\end{remark}

For $v\in \Z$ and $\psi_0\in\DD$ let us define the map
\begin{equation}\label{Phi}
\Phi(v) = U\psi_0 -i\Ga F(v).
\end{equation}
and set $ B_\Z(R) = \{ v\in \Z \, | \, \|v\|_\Z \leq R\}$.
\begin{prop}
Let $\psi_0\in \DD$ and define 
\[ 
\E=\{ v\in B_\Z(R) |\, v(0)= \psi_0\}. 
\]
Then:\\
1) $\E$ is a complete metric space with respect to the metric induced by the $\X$-norm.\\
2) Assume that $p>1$ if $n=2$ and $1< p<3/2$ if $n=3$. 
Then there exist $R$ big enough and $T$ sufficiently small such that $\Phi:\E \to \E$.
\end{prop}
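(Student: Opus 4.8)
The plan is to establish the two claims in turn, both resting on Propositions \ref{gatto}, \ref{cane} and \ref{p:banale}.

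For part 1) I would take an $\X$-Cauchy sequence $\{v_n\}\subset\E$ and use that $\X$ is a Banach space to get $v_n\to v$ in $\X$; the work is to show $v\in\E$. For the boundary value, since $\|v_n\|_\Z\leq R$ gives $\|\partial_t v_n\|_{\infty,2}\leq R$, the continuous representatives of the $v_n$ are uniformly Lipschitz as maps $[0,T]\to L^2$, hence equicontinuous; as $v_n\to v$ in $L^{\infty,2}$ (because $\|\cdot\|_{\infty,2}\leq\|\cdot\|_\X$) this is uniform convergence in $C([0,T];L^2)$, so the limit has a continuous representative with $v_n(0)\to v(0)$ in $L^2$, forcing $v(0)=\psi_0$. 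For $v\in B_\Z(R)$ the point is lower semicontinuity of $\|\cdot\|_\Z$ under the weaker $\X$-convergence. I would invoke that $\X=\tilde{\X}'$ and $L^{\infty,2}=(L^{1,2})'$ are dual spaces with separable preduals, so by Banach--Alaoglu the bounded sequences $\{v_n\}$, $\{\partial_t v_n\}\subset\X$ and $\{\HH v_n\}\subset L^{\infty,2}$ have weak-$*$ convergent subsequences; since $v_n\to v$ in $\X$ (hence in $\mathscr{D}'$) and both $\partial_t$ and $\HH$ are closed, the weak-$*$ limits coincide with $\partial_t v$ and $\HH v$, and weak-$*$ lower semicontinuity of the norms yields $\|v\|_\Z\leq R$. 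Thus $v\in\E$ and $\E$ is complete.

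For part 2) observe first that $\Phi(v)(0)=\psi_0$ is automatic, since $U\psi_0(0)=\psi_0$ and the Duhamel term $\Ga F(v)$ vanishes at $t=0$; the only real task is the bound $\|\Phi(v)\|_\Z\leq R$. The key device is to split off the time-independent value $F(v)(0)=F(\psi_0)$ (legitimate because $v(0)=\psi_0$ on $\E$):
\[
\Phi(v)=U\psi_0-i\,\Ga\big(F(v)-F(\psi_0)\big)-i\,\Ga F(\psi_0).
\]
For the middle term I would combine Proposition \ref{cane}, $\|F(v)-F(v)(0)\|_{\tilde{\Z}}\leq cT^{1-\beta}\|v\|_\Z^p$, with $\Ga:\tilde{\Z}\to\Z$ from Proposition \ref{gatto}, obtaining $\|\Ga(F(v)-F(\psi_0))\|_\Z\leq cT^{1-\beta}R^p$. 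The last term is $v$-independent: $F(\psi_0)$ is a constant-in-time $L^2$ function (Proposition \ref{p:banale}.3), hence belongs to $\tilde{\Z}$ with $\|F(\psi_0)\|_{\tilde{\Z}}=\|F(\psi_0)\|\leq c\|\psi_0\|_\DD^p$, so Proposition \ref{gatto} gives $\|\Ga F(\psi_0)\|_\Z\leq c\|\psi_0\|_\DD^p$, while $\|U\psi_0\|_\Z\leq c\|\psi_0\|_\DD$. Setting $C_0:=c(\|\psi_0\|_\DD+\|\psi_0\|_\DD^p)$ we get $\|\Phi(v)\|_\Z\leq C_0+cT^{1-\beta}R^p$ for every $v\in B_\Z(R)$; choosing $R=2C_0$ and then $T\in(0,1]$ so small that $c\,T^{1-\beta}(2C_0)^p\leq C_0$ (possible since $\beta=2/r<1$ because $r>2$) gives $\|\Phi(v)\|_\Z\leq 2C_0=R$, so $\Phi(\E)\subset\E$.

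I expect the genuine obstacle to be the closedness of $B_\Z(R)$ under the weaker $\X$-topology in part 1): one must push a bound involving $\partial_t$ and $\HH$ through a limit taken only in $\X$, which is precisely where the dualities $\X=\tilde{\X}'$, $L^{\infty,2}=(L^{1,2})'$ and the closedness of $\HH$ are needed. By contrast the part 2) estimate is bookkeeping once one has the right idea of isolating the constant term $F(\psi_0)$: Proposition \ref{cane} only produces the decisive factor $T^{1-\beta}$ for the \emph{increment} $F(v)-F(v)(0)$, whereas the leftover $\Ga F(\psi_0)$ carries no smallness in $T$ but is $v$-independent and bounded by $\|\psi_0\|_\DD^p$, so it is absorbed into the choice of radius rather than contracted.
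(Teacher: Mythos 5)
Your proposal is correct and follows essentially the same route as the paper: part 2) is the paper's argument verbatim (add and subtract $\Ga F(\psi_0)=\Ga F(v)(0)$, apply Propositions \ref{cane} and \ref{gatto}, then choose $R$ large and $T$ small), and part 1) is the same strategy of proving closedness of $\E$ in $\X$ via boundedness-plus-weak-type compactness and lower semicontinuity of the three norms entering $\|\cdot\|_\Z$. The only differences are in the standard facts cited for part 1): you invoke Banach--Alaoglu through the dualities $\X=\tilde\X'$ and $L^{\infty,2}=(L^{1,2})'$ together with weak closedness of $\partial_t$ and of the self-adjoint $\HH$ (tested against $\zeta\in\DD$), whereas the paper quotes Corollary 1.4.42 of Cazenave--Haraux for the vector-valued Sobolev spaces and runs a pointwise-in-time argument with test functions in $C_0^\infty(\RE^n\setminus\{\0\})$; if anything, your pairing against elements of $\DD$ identifies the limit $\HH v$ more cleanly.
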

\begin{proof}[{\bf Proof}]
Let us prove that $\E$, with the metric induced by the $\X$-norm, is a complete metric space. One has $\E\subset\Z\subset\X$.  $\X$ is a Banach space, hence, to prove that it $\E$ is complete it is enough to prove that it is a closed subset in $\X$.  Let $u$ be a limit point of $\E$ so that there exists $\{ u_n \}$ with $u_n \in \E$ and $\|u_n - u\|_\X \to 0$; we want to prove that $u\in \E$.  Obviously, $u\in\X$ and $\|u\|_\X \leq R$. We are left to prove that $\|\partial_t{u}\|_\X\leq R$,  $\|\HH u\|_{\infty,2}\leq R$, and $u(0) = \Psi_0$. We recall that $\partial_t u$ is defined as a distribution on the test functions $\varphi\in C^{\infty}_0((0,T))$ by $\langle \varphi, \partial_t u \rangle = -\langle \partial_t \varphi, u \rangle$. Notice that by hypothesis we have   $ u_n \in W^{1,r}((0,T), L^{p+1}) $ and $\|\partial_t u_n\|_{r,p+1}\leq R\ .$ By well known properties of vector valued Sobolev spaces (see for example Corollary 1.4.42 in \cite{cazhar}) one concludes that there exists a subsequence $u_{n_k}\rightharpoonup v\in  W^{1,r}((0,T), L^{p+1})$ and  $\|\partial_t v\|_{r,p+1}\leq \liminf \|u_{n_k}\|\leq R\ $. Finally by uniqueness $u=v \ .$
In order to prove that $\partial_t u\in L^{\infty, 2}$ and $\| \partial_t u \|_{\infty,2} \leq R$, the same reasoning works replacing weak convergence with weak-* convergence and invoking again Corollary 1.4.42 in \cite{cazhar}. 
Now we prove that $\| \HH u \|_{\infty,2} \leq R$. Since $\| \HH u_n (t) \| \leq R$ for a.e. $t\in [0,T]$ then there exists $v(t)\in L^2$ and a subsequence that we denote with $\HH u_{n_k}(t)$ such that $\HH u_{n_k} (t) \rightharpoonup v(t)$ a.e. in $[0,T]$  and $\|v(t)\|\leq R$. Recall that $C^\infty_0(\RE^n\setminus\{0\})$ is a dense subset in $L^2(\RE^n) \ \, n=2,3$ and let  $\varphi\in C^\infty_0(\RE^n\setminus\{0\})$. We have $\lan \varphi , \HH u_{n_k}(t) \ran \to  \lan \varphi , v(t) \ran$, where now $\lan\ ,\ran$ is the $L^2$ scalar product.
Moreover 
\[
\lan \varphi , \HH u_n(t) \ran = \lan \HH \varphi ,  u_n(t) \ran \to \lan \HH \varphi ,  u(t) \ran = \lan \varphi , \HH u(t) \ran
\]
since $u(t)\in \DD$ a.e. in $[0,T]$. Then $\HH u(t)= v(t)$ and $\|\HH u (t)\| \leq R$ a.e. in $[0,T]$.\\
We are left to prove that $u(0) = \psi_0$ as an $L^2$ identity. We know that $u_n(t)\in W^{1,\infty}((0,T);L^2)$ and $u_n(0)=\psi_0$.  Being $ W^{1,\infty}((0,T);L^2) \hookrightarrow C([0,T]; L^2)$ one has that $u_n$ converges in $C([0,T];L^2)$ and $u(0)=\psi_0\ $ as an identity in $L^2$.\\
Now we prove that $\Phi:\E \to \E$. Notice that, by Proposition \ref{gatto}, $U\psi_0 \in \Z$. Moreover since $F(\psi_0)\in L^2$ (by Proposition \ref{p:banale}), we can say $F(\psi_0)\in \tilde{\Z}$ since it depends on $t$ in a trivial way; therefore $\Ga F(\psi_0)\in \Z$
by Proposition \ref{gatto}. 
We  choose $R> \| U\psi_0 \|_\Z + \|\Ga  F(\psi_0)\|_\Z$ such that $\E$ is not empty since $ U\psi_0\in \E$.
Adding and subtracting $\Gamma F(\psi_0)= \Gamma F(v)(0)$ to the r.h.s. of Eq. \eqref{Phi}, we have  by Proposition \ref{cane}
\begin{align*} 
\| \Phi(v) \|_\Z & \leq  \| U\psi_0 \|_\Z + \|\Ga F(\psi_0)\|_\Z +\| \Gamma (F(v) - F(v)(0))\|_{\Z} \\ 
& \leq  \| U\psi_0 \|_\Z + \|\Ga F(\psi_0)\|_\Z +\| F(v) - F(v)(0)\|_{\tilde{\Z}} \leq  \| U\psi_0 \|_\Z + \|\Ga F(\psi_0)\|_\Z + c T^{1-\beta} R^p
\end{align*}
Then $\Phi:\E \to \E$ if $T$ is sufficiently small.
\end{proof}
Now we can prove part $1)$ and $2)$ of Theorem \ref{local}.

\begin{proof}[{\bf Proof of Theorem \ref{local}. Parts $1)$ and $2)$}] 
For sufficiently small $T$, $\Phi$ is a contraction in the $\X$-norm. Indeed by \eqref{lampada}, we have
\[
\| \Phi(u) - \Phi(v)\|_\X= \| \Ga( F(u) - F(v))\|_\X \leq  c \|  F(u) - F(v) \|_{\tilde{\X}}.
\]
Notice that
\[
\lf| ( F(u) - F(v))(t,x) \ri| \leq c ( |u(t,x)|^{p-1} + |v(t,x)|^{p-1} )\, | u(t,x) - v(t,x)|.
\] 
Using the inequality  $\| f^{p-1} g\|_{1+1/p} \leq \|f\|_{p+1}^{p-1}$, we have
\begin{align*}
\| ( F(u) - F(v))(t) \|_{1+1/p} & \leq c (  \|u(t)\|_{p+1}^{p-1} +  \|v(t)\|_{p+1}^{p-1})\, \| u(t) - v(t)\|_{p+1} .
\end{align*}
Therefore we have
\be\label{pinco2}
\|  F(u) - F(v) \|_{r', 1+1/p} \leq T^{1-\beta} \|  F(u) - F(v) \|_{r , 1+1/p} \leq 
 c  T^{1-\beta} (  \|u\|_{\infty, p+1}^{p-1} +  \|v\|_{\infty, p+1}^{p-1})\, \| u - v\|_{r, p+1}
\eeq
with $\beta = 2/r$ (as in Proposition \ref{cane}). Hence, by the embedding \eqref{DDalpha-embedding},   for $u,v \in \E$ we obtain 
\beq\label{contrazione}
\| \Phi(u) - \Phi(v)\|_\X \leq  c  \, T^{1-\beta} \, R^{p-1}  \| u - v\|_\X.
\eeq
For sufficiently small $T$, $\Phi$ is a contraction in  the $\X$-norm. Since $\E$ is complete with respect to the metric induced by the $\X$-norm then the fixed point equation
\[
u = \Phi(u)
\]
admits a solution $\psi \in \E$. In particular, $\psi \in L^{\infty}([0,T];\DD)$, $\partial_t \psi \in L^\infty([0,T];L^2)$, $\psi\in W^{1,r}((0,T);L^{p+1})$ and  $\psi$ satisfies the identity $\psi=U\psi_0 -i \Ga F(\psi) $. We are left to prove that $\psi  \in C([0,T];\DD) \cap C^1([0,T];L^2)$. Obviously, $U\psi_0$ has the required properties, since $\psi_0\in \DD$ and thanks to the properties of the linear evolution. Concerning $\Gamma F(\psi)$, we start by noticing that $F(\psi)(t) \in L^2$ (a.e. in $[0,T]$), by Proposition \ref{p:banale}, hence $F(\psi) \in L^{\infty,2}$ and finally $U(t-\cdot)  F(\psi) (\cdot)\in L^1((0,T); L^2)$. So, by absolute continuity of the integral, one concludes $\psi\in C([0,T];L^2)$. From the Duhamel formula it is also immediate that $\Gamma F(\psi)\in C([0,T];L^2)$. By the identity (see also \eqref{identity1})
   \[ 
\partial_t\Gamma F(\psi) (t) =   UF(\psi) (0)   + \int_0^t U(t-s) \partial_s F(\psi) (s)  \, ds.
\]
using Proposition \ref{propa} e) and taking into account that $\partial_s F(\psi)(s)\in L^{r',1+1/p}$ we have that $\partial_t\Gamma F(\psi) (t) \in L^{\infty,2}$. On the other hand it is well known that it actually holds the stronger result $\Gamma v\in C([0,T], L^2)$ for $v\in L^{r',1+1/p}$ with $(r,p+1)$ admissible (see Remark \ref{strengthening}). Finally, exploiting the fact that $\HH$ is the infinitesimal generator of $U(t)$ (see also \eqref{identity2})  we have the identity
\[ 
\HH\Gamma F(\psi) (t) = i\partial_t\Gamma F(\psi) - i F(\psi)\ .
\]
The r.h.s. belongs to $C([0,T];L^2)$, or equivalently $\Gamma F(\psi)  \in C([0,T];\DD).$
\end{proof}

\begin{cor} [Local well-posedness for strong solutions]\label{strongeq} 
Let $p>1$ if $n=2$ and $1<p<3/2$ if $n=3$. For any $\psi_0 \in D({\HH})$ there exists $T \in (0, + \infty)$ s.t. the
initial value problem \eqref{cauchy}
has a unique solution $\psi \in C ( [0,T) , \DD) \cap C^1 ( [0,T), 
L^2 (\erre))$.
\end{cor}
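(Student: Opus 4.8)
The plan is to read off the Corollary from Theorem \ref{local} together with the equivalence between strong solutions of \eqref{cauchy} and solutions of the integral equation \eqref{eq}. For existence, parts $1)$ and $2)$ of Theorem \ref{local} already produce, for the given $\psi_0\in\DD$, a time $T\in(0,1]$ and a function $\psi\in C([0,T];\DD)\cap C^1([0,T];L^2)$ solving \eqref{eq}; by the cited equivalence this $\psi$ is precisely a strong solution of \eqref{cauchy}. The statement of the Corollary allows $T\in(0,+\infty)$ and the half-open interval $[0,T)$: the former is harmless because the restriction $T\le 1$ in Theorem \ref{local} was imposed only for notational convenience (see Remark \ref{T_0}), and the latter follows by mere restriction. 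Thus existence is immediate, and the only genuine content to verify is uniqueness in the strong solution class $C([0,T);\DD)\cap C^1([0,T);L^2)$.

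For uniqueness I would argue by a local-in-time contraction followed by a continuation argument. Let $\psi_1,\psi_2$ be two strong solutions emanating from the same datum $\psi_0$; by the equivalence both solve \eqref{eq}. Set $M=\max_{i=1,2}\sup_{t\in[0,T]}\|\psi_i(t)\|_{\DD}$, which is finite since $\psi_i\in C([0,T];\DD)$, and recall the embedding \eqref{DDalpha-embedding}, which gives $\|\psi_i\|_{\infty,p+1}\le cM$. Whenever $\psi_1\equiv\psi_2$ on $[0,a]$ one may restart the Duhamel formula at time $a$ from the common value $\psi_1(a)=\psi_2(a)$, so that on any subinterval $[a,a+\tau]\subset[0,T]$ the estimate \eqref{pinco2} (equivalently \eqref{contrazione}) yields
\[
\|\psi_1-\psi_2\|_{\X([a,a+\tau])}\le c\,\tau^{1-\beta}M^{p-1}\,\|\psi_1-\psi_2\|_{\X([a,a+\tau])},
\]
with $\beta=2/r<1$ in the admitted range of $p$. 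Choosing $\tau$ small enough that $c\,\tau^{1-\beta}M^{p-1}<1$ forces $\psi_1\equiv\psi_2$ on $[a,a+\tau]$.

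The crucial point is that this step length $\tau$ depends only on the uniform bound $M$ and not on the base point $a$; hence, starting from $a=0$ (where $\psi_1(0)=\psi_2(0)=\psi_0$) and iterating, the two solutions agree successively on $[0,\tau],[\tau,2\tau],\dots$, covering the whole of $[0,T]$ in finitely many steps and establishing $\psi_1=\psi_2$. The main obstacle is precisely this promotion of the conditional (in-ball) uniqueness furnished by the contraction mapping to genuine uniqueness in the full strong solution class: one must apply \eqref{pinco2} with the solutions' own $\DD$-norms playing the role of the radius, and verify that the step $\tau$ can be chosen uniformly along $[0,T]$, which is guaranteed by the continuity $\psi_i\in C([0,T];\DD)$. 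This completes the proof of the Corollary.
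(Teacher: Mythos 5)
Your proposal is correct. In the paper the corollary carries no separate proof: it is read off from Theorem \ref{local}, with existence exactly as you argue (parts 1)--2) plus the equivalence between strong solutions of \eqref{cauchy} and solutions of \eqref{eq}), and uniqueness from part 3), whose proof (the unconditional uniqueness proposition) rests on the same ingredients you use: the pointwise Lipschitz bound on $F$, the Strichartz estimate of Proposition \ref{propa}-$f)$, the embedding \eqref{DDalpha-embedding} turning the solutions' $\DD$-bounds into $L^{\infty,p+1}$-bounds, and a H\"older-in-time gain $\tau_*^{1-\frac2r}$ iterated over subintervals of a uniform length. The only mechanical difference is that you restart the Duhamel formula at each step $a=k\tau$ (legitimate, by the group property of $U$ and the already-established identity $\psi_1=\psi_2$ on $[0,a]$), obtaining a genuine contraction on each subinterval, whereas the paper keeps the Duhamel integral based at $t=0$ and instead runs an induction on the global inequality \eqref{uniq-main-ineq}: once $\phi(t)=\|\psi_1(t)-\psi_2(t)\|_{p+1}$ vanishes on $[0,k\tau_*]$, the norms in \eqref{uniq-main-ineq} localize to the last subinterval and H\"older closes the step. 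Both routes are valid; the paper's version yields a slightly stronger statement --- uniqueness in $L^\infty([0,T];\DD)$, requiring neither continuity in time nor a well-defined restart value $\psi_i(a)$ --- while yours is a transparent contraction argument that amply covers the class $C([0,T),\DD)\cap C^1([0,T),L^2)$ appearing in the corollary.
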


\begin{remark}
Notice that for a strong solution of the equation $\psi\in C([0,T),\DD)\cap C^1([0,T), L^2)$, the existence time given in the local well-posedness Theorem actually depends only on $\|\psi_0\|_{\DD}$. In fact, the $L^2$-norm of $\partial_t\psi$ can be bounded in terms of the graph norm of $\psi$ just taking into account that equation in \eqref{cauchy} holds as an $L^2$ identity and using the estimate \eqref{banale}.
\end{remark}

\subsection{Unconditional Uniqueness. Proof of Theorem \ref{local}. Part $3)$}
In the proof of local existence, the fixed point technique guarantees uniqueness only for those solutions $\psi\in C([0,T], \DD)$ that belong to the auxiliary space $L^{r,p+1}$. In this paragraph we show that actually the latter condition is not needed. 
\begin{prop}
Assume that  $p> 1$ if $n=2$ or $1< p<\frac{3}{2}$ if $n=3$. Take $\psi_0 \in \DD$. If $\psi_1$  and $\psi_2$ are in $L^\infty([0,T];\DD)$ for some $T>0$ and  are two solutions of Eq. \eqref{eq}, then $\psi_1= \psi_2$. 
\end{prop}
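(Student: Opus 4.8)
The plan is to show that both solutions automatically possess the auxiliary space-time integrability used in the contraction argument, so that the hypothesis $\psi_i\in L^\infty([0,T];\DD)$ already suffices. First I would observe that, by the embedding $\DD\hookrightarrow L^2\cap L^{p+1}$ coming from \eqref{DDalpha-embedding} (legitimate because $p+1<3$ when $n=3$, which holds as $p<3/2$, and because $p+1<\infty$ when $n=2$), each $\psi_i$ belongs to $L^\infty([0,T];L^2)\cap L^\infty([0,T];L^{p+1})$. Since $[0,T]$ has finite measure and $r<\infty$, the inclusion $L^\infty([0,T])\hookrightarrow L^r([0,T])$ gives $\psi_i\in L^{r,p+1}$, hence $\psi_i\in\X$ and in particular $w:=\psi_1-\psi_2\in\X$. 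This is the crucial point that upgrades the conditional uniqueness of the existence proof to an unconditional one.

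Next I would set up the difference estimate on a short interval. Subtracting the two copies of \eqref{eq} gives $w=-i\,\Ga\big(F(\psi_1)-F(\psi_2)\big)$. Using the group property of $U$, for any $0\le\tau<\tau+\delta\le T$ with $w(\tau)=0$ one has $w(t)=-i\int_\tau^t U(t-s)\big(F(\psi_1)-F(\psi_2)\big)(s)\,ds$ for $t\in[\tau,\tau+\delta]$, so the bound \eqref{lampada} for $\Ga:\tilde{\X}\to\X$, restricted to the subinterval, together with the pointwise estimate for $F(\psi_1)-F(\psi_2)$ and the H\"older-in-time step already carried out in \eqref{pinco2}, yields
\[
\|w\|_{\X([\tau,\tau+\delta])}\le c\,\delta^{1-\beta}\big(\|\psi_1\|_{\infty,p+1}^{p-1}+\|\psi_2\|_{\infty,p+1}^{p-1}\big)\,\|w\|_{\X([\tau,\tau+\delta])},
\]
with $\beta=2/r\in(0,1)$ since $r>2$.

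Finally I would absorb and iterate. Let $M=\max_i\|\psi_i\|_{L^\infty([0,T];L^{p+1})}<\infty$, a bound uniform over all of $[0,T]$. Choosing $\delta$ so small that $c\,\delta^{1-\beta}\,2M^{p-1}\le\tfrac12$ (a choice independent of $\tau$), the displayed inequality forces $\|w\|_{\X([\tau,\tau+\delta])}=0$ whenever $w(\tau)=0$. Starting from $w(0)=0$ (both solutions share the datum $\psi_0$) this gives $w\equiv0$ on $[0,\delta]$; since $\delta$ does not depend on the base point, a finite number $\lceil T/\delta\rceil$ of repetitions of the same step, each time using $w(k\delta)=0$ and the group property to restart the Duhamel formula, propagates $w\equiv0$ to the whole of $[0,T]$, i.e. $\psi_1=\psi_2$.

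The main obstacle is the first paragraph: one must recognize that membership in $L^\infty([0,T];\DD)$ already secures the Strichartz integrability $L^{r,p+1}$ needed to run the contraction estimate, which is exactly the condition the fixed-point scheme had to impose by hand. Once this a priori regularity is in place, the absorption and the time-stepping are routine, the only care being the uniformity in $\tau$ of the time step $\delta$, which is guaranteed by the global bound $M$ on $[0,T]$ and by the time-translation invariance of the estimates for $U$ and $\Ga$.
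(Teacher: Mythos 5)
Your proof is correct and follows essentially the same route as the paper's: the embedding $L^\infty([0,T];\DD)\hookrightarrow L^{\infty,2}\cap L^{r,p+1}$ to secure finiteness of the auxiliary norms, the Duhamel representation of the difference, the Strichartz bound for $\Gamma$, the Lipschitz estimate on $F$, a H\"older-in-time gain $\delta^{1-2/r}$ (possible since $r>2$), absorption on a short interval of uniform length, and finite iteration up to $T$. The only real difference is bookkeeping: you restart the Duhamel formula at each node $k\delta$, which requires the pointwise statement $w(k\delta)=0$ --- justified either by noting that any $L^\infty([0,T];\DD)$ solution of the integral equation is automatically in $C([0,T];L^2)$, or by observing that $F(\psi_1)-F(\psi_2)$ vanishes a.e.\ on $[0,k\delta]$ so the integral effectively starts at $k\delta$ --- whereas the paper works with the scalar function $\phi(t)=\|\psi_1(t)-\psi_2(t)\|_{L^{p+1}}$ on the full interval $[0,\tau]$ and lets the already-established a.e.\ vanishing kill the early portion of the integrals, thereby never needing to evaluate $w$ at a point.
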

\begin{proof}[{\bf Proof}] 
Let $\tau$ be any time in $(0, T]$. 
Reasoning as in the derivation of \eqref{pinco2} we obtain
\[ \begin{aligned}
&\big\| F(\psi_1) -  F(\psi_2) \big\|_{L^{r'}([0,\tau],L^{1+\frac1p})} 
 \leq   C \big( 
 \| \psi_1 \|^{p-1}_{L^\infty([0,\tau],L^{p+1})} +  \| \psi_2 \|^{p-1}_{L^\infty([0,\tau],L^{p+1})}\big) \|\psi_1 - \psi_2\|_{L^{r'}([0,\tau],L^{p+1})} 
\end{aligned}
\]
with  $r = \frac{4(p+1)}{n(p-1)}$ as in Definition \ref{d:rp} so that  we can apply Proposition \ref{propa} (for any $T>0$, see Remark \ref{T_0}).  

By Eq. \eqref{eq}, 
\[
|\psi_1 - \psi_2| =\big| \Gamma(F(\psi_1) -  F(\psi_2) )\big|.
\]
Hence, using Prop. \ref{propa}.$f)$ and  the inequality above, we infer
\begin{align}
\|\psi_1 - \psi_2\|_{L^{r}([0,\tau],L^{p+1})}
\leq & C  \|F(\psi_1) - F(\psi_2) \|_{L^{r'}([0,\tau],L^{1+\frac1p})}  \nonumber \\ 
 \leq &C\big( 
 \| \psi_1 \|^{p-1}_{L^\infty([0,\tau],L^{p+1})} +  \| \psi_2 \|^{p-1}_{L^\infty([0,\tau],L^{p+1})}\big) \|\psi_1 - \psi_2\|_{L^{r'}([0,\tau],L^{p+1})} .  \label{factory}
\end{align}

Since, by assumption, $\psi_1, \psi_2 \in L^\infty([0,\tau];\DD)$, the embedding \eqref{DDalpha-embedding}, together with  the inequality \eqref{factory}, give 
\begin{equation}\label{uniq-main-ineq}
\|\psi_1 - \psi_2\|_{L^{r}([0,\tau],L^{p+1})}
\leq C
\|\psi_1 - \psi_2\|_{L^{r'}([0,\tau],L^{p+1})}  \qquad   \forall \tau\in(0,T].
\end{equation}

Let $\phi(t) : = \|\psi_1(t) - \psi_2(t)\|_{L^{p+1}}$. By the inequality above,  together with H\"older's inequality,  we infer 
\[
\|\phi \|_{L^{r}([0,\tau_*])}
\leq C  \tau_*^{1-\frac2r}
\|\phi \|_{L^{r}([0,\tau_*])} .
\]
Since $r>2$, for $\tau_*$ small enough (such that $C  \tau_*^{1-\frac2r}<1$),  the latter inequality implies $\phi(t) =0$ a.e. in $[0,\tau_*]$. Next, assume that $\phi(t) =0$ a.e. in $[0,k\tau_*]$ for some positive integer $k$, then, inequality \eqref{uniq-main-ineq} (applied for $\tau = (k+1)\tau_*$) is equivalent to 
\[
\|\phi \|_{L^{r}([k\tau_*,(k+1)\tau_*])}
\leq C  
\|\phi \|_{L^{r'}([k\tau_*,(k+1)\tau_*])} .
\]
Hence, by using again H\"older's inequality, we infer  $\phi(t) =0$ a.e. in $[0,(k+1)\tau_*]$. We proceed in this way, by induction,  until  $(k_*+1)\tau_*\geq T$, for some positive integer $k_*$. In the final step we  address  the interval  $[k_*\tau_*,T]$. In this way we prove  $\phi(t) =0$ a.e. in $[0,T]$, henceforth $\psi_1 = \psi_2$ a.e. 
This concludes the proof of the proposition. 
\end{proof}

\subsection{Continuous dependence on initial data} 
\begin{proof}[{\bf Proof of Theorem \ref{local}. Part $4)$}]
Assume that $\|\psi_0-\psi_0^n\|_{\DD}\rightarrow 0$; let $\psi$ be the solution corresponding to the  initial datum $\psi_0$ and $\psi^n$ the solution corresponding to $\psi^n_0\ $ according to the local existence result proved in the previous section. Notice preliminarily that by hypothesis we have $\|\psi^n_0\|_\DD\leq 2\|\psi_0\|_\DD$ and from the local existence we obtain that there exists a time $T=T(\|\psi_0\|_\DD)$ and $n_0$ such that 
both $\psi$ and $\psi_n$ are defined in $[0,T] $ for $n\geq n_0$; moreover the following uniform bound holds 
\beq\label{uniform3}
\|\psi\|_{\infty,\DD}+\|\psi^n\|_{\infty,\DD}\leq C\|\psi_0\|_\DD\ .
\eeq
From \eqref{eq} and the analogous
\[
\psi^n=U\psi^n_0 -i \Ga F(\psi^n) \ 
\]
we obtain 
\[
\psi-\psi^n=U(\psi_0-\psi^n_0) -i(\Ga F(\psi) -\Ga F(\psi^n)).
\]
From Strichartz estimates and contractivity in the $\X$- norm of $\psi\mapsto \Gamma F(\psi)$ given in \eqref{contrazione} it follows that, choosing possibly a $T'<T$,  

\begin{equation*}
\|\psi-\psi^n\|_{\X}\leq C\|\psi_0-\psi^n_0\|  + \frac{1}{2}\|\psi -\psi^n\|_{\X}\leq C\|\psi_0-\psi^n_0\|_{\DD}  + \frac{1}{2}\|\psi -\psi^n\|_{\X}
\end{equation*}
and hence
\[
\|\psi-\psi^n\|_{\X}\leq 2C\|\psi_0-\psi^n_0\|_{\DD} .
\]
This gives continuity of the solution map in $\X$ and in particular in $L^r((0,T'),L^{p+1})$. 
Let us show that we also have $\|\partial_t\psi-\partial_t\psi^n\|_{r,p+1}\leq C\|\psi-\psi^n_0\|_\DD$, so that the solution map $\psi_0\to \psi(t, \psi_0)$ is continuous as a map from $\DD$ to $W^{1,r}((0,T'),L^{p+1})$ for suitable $T'\leq T$. Taking the time derivative of the integral equation both for $\psi$ and $\psi^n$, subtracting and rearranging we obtain 
\begin{align}\label{pinco}
\partial_t(\psi-\psi^n)=-i\Ga F'(\psi^n)\partial_t(\psi-\psi^n) +\mathcal R_1 +\mathcal R_2 
\end{align}
where 
\begin{align*}
\mathcal R_1&=-i U(\HH(\psi_0-\psi^n_0))-iU(F(\psi_0)-F(\psi^n_0))\\
\mathcal R_2&= - i\Ga \big(F'(\psi)-F'(\psi^n)\big)\partial_t\psi.
\end{align*}
By means of dispersive estimates \ref{propa}-$b)$ and \ref{propa}-$f)$ on $(0,T')$ we obtain 
\begin{align*} 
\|\partial_t(\psi-\psi^n)\|_{L^r((0,T'),L^{p+1})}  \leq &C(T')\| F'(\psi^n)\partial_t(\psi-\psi^n)\|_{L^{r'}((0,T'),L^{1+1/p})} \\ &  + C(T') \big(\|\HH (\psi_0-\psi^n_0)\|  + \|F(\psi_0)-F(\psi^n_0)\|\big)\\ & + C(T')\| \big(F'(\psi)-F'(\psi^n)\big)\partial_t\psi\|_{L^{r'}((0,T'),L^{1+1/p})} 
\end{align*}
where $ C (T')$ is a constant which is uniformly bounded for $T'\in (0,1]$.  Let us consider the first addendum in the previous inequality. By the bound in Eq. \eqref{derivata2} we have 
 \begin{equation*}
 \| F'(\psi^n(t))\partial_t(\psi-\psi^n)(t)\|_{1+1/p} \leq C \|\psi^n(t)\|_{p+1}^{p-1}\|\partial_t(\psi-\psi^n)(t)\|_{p+1}.
 \end{equation*}
Hence, by H\"older inequality in time, 
\[
\| F'(\psi^n)\partial_t(\psi-\psi^n)\|_{L^{r'}((0,T'),L^{1+1/p})}\leq  C T'^{ 1-\frac2r} \| \partial_t(\psi-\psi^n)\|_{L^{r}((0,T'),L^{p+1})} 
\]
where the bound is uniform in $n$ thanks to \eqref{uniform3} and embedding \eqref{DDalpha-embedding}. Taking a smaller $T'$ if needed, one gets
\begin{align*} 
\|\partial_t(\psi-\psi^n)\|_{L^r((0,T'),L^{p+1})}  \leq & \frac{1}{2} \| \partial_t(\psi-\psi^n)\|_{L^{r}((0,T'),L^{p+1})} \\ &+ C(T') \big(\|\HH (\psi_0-\psi^n_0)\|  + \|F(\psi_0)-F(\psi^n_0)\|\big) \\ &+ C(T')\| \big(F'(\psi)-F'(\psi^n)\big)\partial_t\psi\|_{L^{r'}((0,T'),L^{1+1/p})} 
\end{align*}
and hence
\begin{align*} 
\|\partial_t(\psi-\psi^n)\|_{L^r((0,T'),L^{p+1})}  \leq & 2C(T') \big(\|\HH (\psi_0-\psi^n_0)\|  + \|F(\psi_0)-F(\psi^n_0)\|\big) \\ &+ 2C(T')\| \big(F'(\psi)-F'(\psi^n)\big)\partial_t\psi\|_{L^{r'}((0,T'),L^{1+1/p})} .
\end{align*}
We have to show that the three terms on the r.h.s vanish when $\|\psi-\psi^n \|_{\DD}\to 0.$ For the first term this is obvious. For the second term we recall (see Proposition \ref{p:banale}-1)) that the map $F:L^{2p}\to L^2$ is continuous and  
\[
\|F(\psi_0^n)-F(\psi_0)\|\leq C\|\psi^n_0-\psi^n_0\|_{2p}^p\leq C\|\psi^n_0-\psi_0\|_{\DD}^p
\]
where the last inequality follows from \eqref{DDalpha-embedding}.
One concludes that $\|F(\psi_0)-F(\psi^n_0)\|\to 0$ as $\|\psi_0-\psi_0^n\|_{\DD}\to 0$. For the last term, exploiting again Proposition \ref{p:banale}-2),  in particular the continuity of $F'$,  one has that $\| \big(F'(\psi)-F'(\psi^n)\big)\partial_t\psi\|_{1+1/p}\to 0$ point-wise a.e. in time. Moreover, notice that
\[
\| \big(F'(\psi)-F'(\psi^n)\big)\partial_t\psi\|_{1+1/p} \leq C(\|\psi\|_{p+1}^{p-1}+ \|\psi^n\|_{p+1}^{p-1})\|\partial_t\psi\|_{p+1}\leq C\|\partial_t\psi\|_{p+1}
\]
where the latter bound  is uniform in $n$ thanks again to \eqref{DDalpha-embedding} and \eqref{uniform3}. Now we know from local existence part that $\partial_t\psi\in L^{r,p+1}$, and being $r'<r$, it also holds $\partial_t\psi\in L^{r',p+1}$, so that by dominated convergence theorem  (on the time integral)
$\| \big(F'(\psi)-F'(\psi^n)\big)\partial_t\psi\|_{L^{r',1+1/p}}\to 0$ as $n\to \infty.$
 We conclude that $ \|\psi-\psi^n\|_{W^{1,r}((0,T'),L^{p+1})}\to 0$ as $\|\psi-\psi^n \|_{\DD}\to 0$. An almost identical analysis, starting again from \eqref{pinco}, but this time making use of \ref{propa}-$a)$ and \ref{propa}-$e)$ shows that $ \|\partial_t\psi-\partial_t\psi^n\|_{\infty,2}\to 0$ as $\|\psi_0-\psi_0^n \|_{\DD}\to 0$.
From this last property and again exploiting the equation, we want to deduce finally that  $\|\psi-\psi^n\|_{\DD}\to 0$ as $\|\psi_0-\psi_0^n \|_{\DD}\to 0\ .$ To this end, notice that we have 
\beq\label{contdominio}
\|i\partial_t\psi-i\partial_t\psi^n\|_{\infty,2}=\|\HH\psi+F(\psi)-\HH\psi^n -F(\psi^n)\|_{\infty,2}\to 0
\eeq
Notice that from \eqref{modifiedembeddings} we have $\DD^{s} \hookrightarrow L^{2p} $ with $s$ at least equal to $\frac{n}{4}(1-1/p)$ (and at most $1/2$ or $1/4$ according to dimension $2$ or $3$). From this and  \ref{p:banale}-1)) we obtain continuity of $F:\DD^{s}\to L^2$. On the other hand (see the following Remark \ref{fractional}) from continuity of solution with respect to data in $\X$, already shown, it follows continuity in $\DD^s$, so that from $\|\psi_0^n-\psi_0\|_{\DD^s}\to 0$ we get $\|F(\psi^n)-F(\psi)\|_{\infty,2}\to 0$ and from \eqref{contdominio} we conclude. Finally, to cover the whole interval $[0,T]$ we iterate the argument a finite number of times, possibly extracting a different subsequence from $\{\psi^n_0\}$.
\end{proof}
\begin{remark}\label{fractional}
Notice that from the sole continuity of the solution map in $\X$, by interpolation and use of the uniform bound \eqref{uniform3} we obtain
\[
\|\psi-\psi^n\|_{\DD^s}\leq \|\psi-\psi^n\|^s_{\DD}\|\psi-\psi^n\|^{1-s} \leq C\|\psi_0-\psi^n_0\|^{1-s} \qquad s\in (0,1)
\]
which assures continuity of the solution map with values in $\DD^s$. This does not use the differentiability of the nonlinearity $F$, needed to derive continuity in $\DD$.
\end{remark}
\begin{remark}
Concerning continuity with respect to initial data several results are possible, depending on the functional spaces where continuity is desired. In the previous proof we actually proved that $\|\psi^n-\psi\|_\Z\to 0$ as $\|\psi_0^n-\psi_0\|_{\DD}\to 0\ ,$ which more than stated in Theorem \ref{local}.
\end{remark}

\subsection{Blow-up alternative}
\begin{proof}[{\bf Proof of Theorem \ref{local}. Part $5)$}]
 Let us define
 \begin{equation*}
  M^*:=\sup_{t\in[0,T^*)}\|\Psi(t)\|_{\DD}
 \end{equation*}
 and suppose that $M^*<\infty\ .$\\
 It follows that there exists a sequence of times $\left\{t_n\right\}\subset\RE^+$, $t_n\to T^*$, such that
 \[
  \lim_{n\to \infty}\|\psi(t_n)\|_{\DD}=M\leq M^*.
 \]
 Let $T^*<+\infty$. The local well-posedness proof does not depend on the initial time $t_0$, but only on the fact that $\psi(t_0)\in\DD $. In particular, one sees that from the definition of $M^*$, $\|\psi(t_0)\|_{\DD}\leq M^*$ for every $t_0\in(0,T^*)$. As a consequence the existence time $T(t_0)$ obtained starting from any $t_0\in(0,T^*)$ satisfies
 \[
  T(t_0)\geq C(M^*)>0,
 \]
 for some suitable constant $C(M^*)$ depending only on $M^*$. Setting now $t_0:=t_{{n_0}}$ with $t_{{n_0}}>T^*-C(M^*)$, one concludes that the solution exists beyond $T^*$, which contradicts its definition.
\end{proof}
\begin{remark}
According to the definition of $T^*$, there exists a unique function $\psi\in C([0,T^*);\DD) \cap C^1([0,T^*);L^2)$ coinciding for every $T<T^*$ with the solution $\psi\in C([0,T];\DD) \cap C^1([0,T];L^2)$ of \eqref{eq} as defined by the local existence theorem. The function $\psi$ so defined on $[0,T^*)$ is called the maximal solution of \eqref{eq}.
\end{remark}

\section{Conservation laws and global well posedness \label{s:gwp}}
\subsection{Mass and Energy conservation}
\begin{teo}{(Conservation of Mass and Energy)} In the hypotheses of Theorem \ref{local} we have:
\begin{enumerate}
\item $L^2$- mass is conserved along the evolution: $\| \psi(t) \|^2=\| \psi_0\|^2\qquad \forall t\in [0,T^*) ;$ 
\item Energy is conserved along the evolution: \ \ $E(\psi(t))=E(\psi_0)\qquad  \forall t\in [0,T^*)$\\  where 
\[
E(\psi)=\frac{1}{2}\langle\psi, \HH\psi\rangle \pm \frac{1}{p+1}\|\psi \|^{p+1}_{p+1}\qquad \psi\in \DD.
\]
\end{enumerate}
\end{teo}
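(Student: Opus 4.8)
The plan is to prove that along the maximal solution $\psi\in C([0,T^{*});\DD)\cap C^{1}([0,T^{*});L^{2})$ both $t\mapsto\|\psi(t)\|^{2}$ and $t\mapsto E(\psi(t))$ are $C^{1}$ with vanishing derivative, so that they are constant; it suffices to argue on each $[0,T]$ with $T<T^{*}$ and then let $T\uparrow T^{*}$. Throughout I use the equation as the $L^{2}$ identity $\partial_{t}\psi=-i(\HH\psi+F(\psi))$, noting that $\HH\psi$, $F(\psi)$ and $\partial_{t}\psi$ all belong to $C([0,T];L^{2})$; that $F(\psi)\in L^{2}$ is Proposition \ref{p:banale}, and in dimension three this is exactly where the restriction $p<3/2$ enters, through $\psi\in\DD\hookrightarrow L^{2p}$ with $2p<3$ (embedding \eqref{DDalpha-embedding}).

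For the mass, since $\psi\in C^{1}([0,T];L^{2})$ one may differentiate the $L^{2}$ inner product directly: $\frac{d}{dt}\|\psi\|^{2}=2\Re\langle\psi,\partial_{t}\psi\rangle$. Substituting the equation gives $\langle\psi,\partial_{t}\psi\rangle=-i\big(\langle\psi,\HH\psi\rangle+\langle\psi,F(\psi)\rangle\big)$, where $\langle\psi,\HH\psi\rangle\in\RE$ because $\HH$ is self-adjoint and $\psi(t)\in\DD$, and $\langle\psi,F(\psi)\rangle=\pm\|\psi\|_{p+1}^{p+1}\in\RE$. Hence $\langle\psi,\partial_{t}\psi\rangle$ is purely imaginary and $\frac{d}{dt}\|\psi\|^{2}=0$.

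For the energy the nonlinear term is handled as in the classical $H^{2}$ theory (cf. \cite{cazhar}): a difference-quotient computation combined with dominated convergence — the dominating function and the finiteness of the limiting pairing both resting on $\psi\in C([0,T];\DD)\hookrightarrow L^{2p}$, so that $|\psi|^{p-1}\psi\in C([0,T];L^{2})$ — yields $\frac{d}{dt}\,\tfrac{1}{p+1}\|\psi\|_{p+1}^{p+1}=\Re\langle|\psi|^{p-1}\psi,\partial_{t}\psi\rangle$, and therefore $\frac{d}{dt}\big[\pm\tfrac{1}{p+1}\|\psi\|_{p+1}^{p+1}\big]=\Re\langle F(\psi),\partial_{t}\psi\rangle$. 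The genuinely delicate term is the quadratic form $\tfrac12\langle\psi,\HH\psi\rangle$: the naive identity $\frac{d}{dt}\langle\psi,\HH\psi\rangle=\langle\partial_{t}\psi,\HH\psi\rangle+\langle\psi,\HH\partial_{t}\psi\rangle$ is meaningless, since we control $\psi$ only in $C^{1}([0,T];L^{2})$ and not in $C^{1}([0,T];\DD)$, so $\HH\partial_{t}\psi$ is undefined. Instead I would pass through the difference quotient and use self-adjointness to keep $\HH$ on the fixed argument:
\[
\langle\psi(t+h),\HH\psi(t+h)\rangle-\langle\psi(t),\HH\psi(t)\rangle=\langle\psi(t+h)-\psi(t),\HH\psi(t+h)\rangle+\langle\HH\psi(t),\psi(t+h)-\psi(t)\rangle ,
\]
where in the second pairing $\langle\psi(t),\HH(\cdot)\rangle=\langle\HH\psi(t),\cdot\rangle$ uses only $\psi(t)\in\DD$. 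Dividing by $h$ and letting $h\to0$, the increment $\tfrac{\psi(t+h)-\psi(t)}{h}$ converges to $\partial_{t}\psi(t)$ in $L^{2}$ while $\HH\psi(t+h)\to\HH\psi(t)$ in $L^{2}$ (as $\psi\in C([0,T];\DD)$ and $\|\cdot\|_{\DD}$ is equivalent to the graph norm of $\HH$), giving $\frac{d}{dt}\langle\psi,\HH\psi\rangle=2\Re\langle\HH\psi,\partial_{t}\psi\rangle$.

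Collecting the pieces and inserting the equation, $\frac{d}{dt}E(\psi)=\Re\langle\HH\psi+F(\psi),\partial_{t}\psi\rangle=\Re\langle i\partial_{t}\psi,\partial_{t}\psi\rangle=\Re\big(-i\|\partial_{t}\psi\|^{2}\big)=0$. Every pairing above is an honest $L^{2}$ inner product, the resulting derivative is continuous in $t$ (since $\HH\psi$, $F(\psi)$ and $\partial_{t}\psi$ lie in $C([0,T];L^{2})$), and $E(\psi(\cdot))$ is itself continuous, so it is constant on $[0,T]$ and hence on $[0,T^{*})$. The main obstacle is precisely the justification of differentiating the quadratic form with the limited regularity available; the self-adjointness trick in the difference quotient is what removes the need for any control of $\partial_{t}\psi$ in $\DD$, and the condition $p<3/2$ in three dimensions is exactly what guarantees that the nonlinear pairing $\langle F(\psi),\partial_{t}\psi\rangle$ is well defined.
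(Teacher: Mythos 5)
Your proposal is correct and takes essentially the same route as the paper: mass conservation by pairing the equation (valid as an $L^2$ identity) with $\psi$ and taking imaginary parts, and energy conservation by differentiating $E(\psi(t))$ along the flow and using the equation to reduce the derivative to $\Re\langle i\partial_t\psi,\partial_t\psi\rangle=0$. The only difference is one of detail: where the paper simply asserts that $E_{lin}$ and $E_{nl}$ are differentiable with respect to the $L^2$-norm on $\DD$ with gradients $\HH\psi$ and $\pm|\psi|^{p-1}\psi$, you make this explicit through the difference-quotient/self-adjointness argument, which is a legitimate unpacking of the same chain-rule computation.
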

\begin{proof}[{\bf Proof}]
Thanks to Corollary \ref{strongeq}, the equation $i\partial_t\psi=\HH\psi \pm|\psi|^{p-1}\psi $ holds as an identity in $L^2$. After taking the inner product with $\psi$ and then the imaginary part of the resulting equation one gets mass conservation.\\
Consider now the energy. Recall that $\HH$ is self-adjoint on $\DD$, the corresponding quadratic form 
 \[
 E_{lin}:\DD\rightarrow \RE,\qquad   \psi\mapsto E_{lin}(\psi):=\f{1}{2}\langle\psi,\HH\psi\rangle
 \]
is continuous with respect to the graph norm and differentiable with respect to the $L^2$ norm, with gradient given by $\mathcal H\psi$.\\ Being $p>1$, the same holds true for the nonlinear functional\\  
\[
E_{nl}:\DD\rightarrow \RE,\qquad \psi\mapsto E_{nl}(\psi):=\pm\frac{1}{p+1}\|\psi \|^{p+1}_{p+1}
\]
with gradient given by $\pm|\psi|^{p-1}\psi\ .$\\
We can now differentiate with respect to time the total energy along a solution $\psi(t)$ of \eqref{cauchy} and we get
 \begin{align*}
  \frac{d}{dt} E(\psi(t))
   =&\Re\{\langle\partial_t\psi(t),\HH\psi(t)\pm |\psi(t)|^{p-1}\psi(t)\rangle\}\\
    =&\Re\{\langle\partial_t\psi(t),i\partial_t\psi(t)\rangle\}=0\ \ \ \ \ \forall t\in(0,T^*)\ .
 \end{align*}

\end{proof}

\subsection{Energy bound}
Let us consider the focusing case. Replacing in \eqref{adaptedGN} $q=p+1$ we obtain
\begin{equation}\label{boundLp}
\begin{aligned}
& \|\psi\|_{L^{p+1}}^{p+1} \leq c\|\psi\|^{(1-s)(p+1)}_{L^2} \|\psi\|^{s(p+1)}_{\DD^{1/2}}\qquad  && s\in \Big(\frac{p-1}{p+1},1\Big)\qquad  &d=2; \\
&\|\psi\|_{L^{p+1}}^{p+1} \leq c\|\psi\|^{(1-s)(p+1)}_{L^2} \|\psi\|^{s(p+1)}_{\DD^{1/2}}  && s\in \Big(\frac{3p-3}{2p+2},1/2\Big)  &d=3.
\end{aligned}
\end{equation}
From mass and energy conservation and inequalities \eqref{boundLp} we conclude that both the linear energy $ \langle \psi,\HH \psi\rangle := \|\psi\|^2_{\DD^{1/2}}-\lambda\|\psi\|^2$ and the nonlinear term $\|\psi\|^{p+1}_{L^{p+1}}$, are uniformly bounded in terms of the mass and energy of the initial datum if the quantity $s(p+1)<2$. From the limitation on $s$ this occurs in the $n=2$ case for $p<3$ and in the $n=3$ case for $p<7/3$. These limitations coincide with the ones of the standard NLS equation. Notice however that in the $n=3$ case well posedness in $\DD$ prevents $p\geq 3/2$.

\subsection{Global existence.} 
\begin{teo}{(Global existence)} Let $1<p<3$ if $n=2$ and $1<p<\frac{3}{2}$ if $n=3$ and consider the maximal solution $\psi\in C([0,T^*);\DD) \cap C^1([0,T^*);L^2)$ of the problem \eqref{cauchy}.
Then the solution $\psi$ is global, i.e. $T^*=\infty$.

\end{teo}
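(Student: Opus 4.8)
The plan is to deduce global existence from the blow-up alternative of Theorem \ref{local}, part 5): it suffices to show that along the maximal solution the operator-domain norm $\|\psi(t)\|_{\DD}$ stays bounded on $[0,T^*)$, since then $T^*$ cannot be finite. I would argue by contradiction, assuming $T^*<\infty$, and establish the uniform bound $\sup_{[0,T^*)}\|\psi(t)\|_{\DD}<\infty$.

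The first step is to control the form-domain norm $\|\psi(t)\|_{\DD^{1/2}}$ uniformly in time via conservation of mass and energy. In the defocusing case this is immediate: writing $E(\psi)=\tfrac12\|\psi\|^2_{\DD^{1/2}}-\tfrac{\lambda}{2}\|\psi\|^2+\tfrac{1}{p+1}\|\psi\|^{p+1}_{p+1}$ and using that the mass $\|\psi\|^2$ is conserved gives $\tfrac12\|\psi\|^2_{\DD^{1/2}}\leq E(\psi_0)+\tfrac{\lambda}{2}\|\psi_0\|^2$. In the focusing case I would invoke the Energy bound subsection: the inequality \eqref{boundLp} bounds $\|\psi\|^{p+1}_{p+1}$ by $\|\psi\|^{s(p+1)}_{\DD^{1/2}}$ with $s(p+1)<2$ (here the ranges $p<3$ for $n=2$ and $p<3/2<7/3$ for $n=3$ enter), so a Young-type absorption together with mass conservation yields again a uniform bound on $\|\psi(t)\|_{\DD^{1/2}}$. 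As a by-product, the adapted Gagliardo--Nirenberg inequalities \eqref{adaptedGN}, valid for $\|\cdot\|_{2p}$ and $\|\cdot\|_{p+1}$ precisely because $2p<3$ when $n=3$, furnish uniform bounds on $\|\psi(t)\|_{2p}$ and $\|\psi(t)\|_{p+1}$; in particular $\|F(\psi(t))\|=\|\psi(t)\|_{2p}^p$ is uniformly bounded on $[0,T^*)$.

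The main step is the passage from the $\DD^{1/2}$-bound to a $\DD$-bound, and here I would estimate $w:=\partial_t\psi$. Differentiating the integral equation \eqref{eq} in time, using the identity \eqref{identity1} and $\partial_tF(\psi)=F'(\psi)\partial_t\psi$, one sees that $w$ solves the linear Duhamel equation $w=Uw_0-i\Ga\big(F'(\psi)w\big)$ with datum $w_0=-i(\HH\psi_0+F(\psi_0))\in L^2$. Estimating in $\X$ by Proposition \ref{propa} (parts $a)$, $b)$, $e)$, $f)$) and bounding the nonlinearity through $\|F'(\psi)w\|_{1+1/p}\leq C\|\psi\|^{p-1}_{p+1}\|w\|_{p+1}$ (Proposition \ref{p:banale}, part 2) and H\"older in time, I obtain on any subinterval $[a,a+\tau]\subset[0,T^*)$
\[
\|w\|_{\X([a,a+\tau])}\leq C\|w(a)\|+CM\,\tau^{1-\beta}\,\|w\|_{\X([a,a+\tau])},\qquad \beta=\f{2}{r},
\]
where $M:=C\sup_{[0,T^*)}\|\psi(t)\|_{p+1}^{p-1}<\infty$ by the previous step. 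Choosing $\tau_*$ depending only on $M$ so that $CM\tau_*^{1-\beta}\leq\tfrac12$ gives $\|w(a+\tau_*)\|\leq 2C\|w(a)\|$. Crucially $\tau_*$ is independent of $a$, so iterating over the finitely many steps needed to cover $[0,T^*)$ (recall we assumed $T^*<\infty$) yields $\sup_{[0,T^*)}\|w(t)\|<\infty$. Finally, since the equation holds as an $L^2$ identity, $\HH\psi=i\partial_t\psi-F(\psi)=iw-F(\psi)$, whence $\|\HH\psi(t)\|\leq\|w(t)\|+\|F(\psi(t))\|$ is uniformly bounded; this controls $\|\psi(t)\|_{\DD}$ and contradicts the blow-up alternative.

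I expect the delicate point to be exactly this bootstrap: the conserved energy only controls the half-power norm $\|\psi\|_{\DD^{1/2}}$, whereas the blow-up alternative is phrased in the operator-domain norm $\|\psi\|_{\DD}$. Because $F'(\psi)$ is merely $\RE$-linear and not a self-adjoint potential, the $L^2$-norm of $w$ is not conserved, so one cannot close the estimate by a naive energy identity; the Strichartz estimates of Proposition \ref{propa} together with the \emph{uniform-in-time} a priori bound on $\|\psi\|_{p+1}$, which makes the local step length $\tau_*$ independent of the interval, are what make the iteration go through.
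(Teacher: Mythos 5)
Your proposal is correct, and its skeleton coincides with the paper's proof: argue by contradiction with the blow-up alternative, use mass/energy conservation plus the adapted Gagliardo--Nirenberg inequalities \eqref{adaptedGN} to get the uniform a priori bounds \eqref{apriori} on $\|\psi\|_{\DD^{1/2}}$, $\|\psi\|_{p+1}$, $\|\psi\|_{2p}$, write $\HH\psi = i\partial_t\psi - F(\psi)$ so that everything reduces to a uniform $L^2$ bound on $w=\partial_t\psi$, and estimate $w$ by Strichartz estimates (Prop. \ref{propa}) applied to the differentiated Duhamel equation (the paper's Eq. \eqref{strangle}, identical to your $w = Uw_0 - i\Gamma(F'(\psi)w)$). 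Where you genuinely diverge is in how the estimate on $w$ is closed over $[0,T^*)$. The paper works with global-in-time norms: it derives $\|\partial_t\psi\|_{L^r((0,\tau),L^{p+1})} \leq C\bigl(1+\|\partial_t\psi\|_{L^{r'}((0,\tau),L^{p+1})}\bigr)$ with $C$ independent of $\tau<T^*$, splits the $L^{r'}$ norm into a piece on $(0,T^*-\ve)$ (finite by Eq. \eqref{billy}) and a piece on $(\tau-\ve,\tau)$ absorbed into the left-hand side for small $\ve$, and then lets $\tau\to T^*$; only afterwards does it return to the Duhamel formula for the $L^\infty L^2$ bound. You instead restart the Duhamel formula at intermediate times $a$ and iterate a contraction-type estimate over finitely many intervals of uniform length $\tau_*$, the uniformity of $\tau_*$ coming precisely from the a priori $L^\infty_t L^{p+1}_x$ bound; this is more elementary, avoids the limiting argument, and directly produces $\sup_{[0,T^*)}\|w(t)\|\leq (2C)^N\|w(0)\|$ with $N\sim T^*/\tau_*$ finite, at the price of an exponentially large (but harmless) constant. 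The paper's route yields in addition the global space-time bound $\partial_t\psi\in L^r((0,T^*),L^{p+1})$ in one pass.

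One point you should make explicit: the absorption step $\|w\|_{\X([a,a+\tau])}\leq C\|w(a)\| + \tfrac12\|w\|_{\X([a,a+\tau])}$ is only legitimate if you already know $\|w\|_{\X([a,a+\tau])}<\infty$. This is exactly what the paper establishes first (its Eq. \eqref{billy}), via part $2)$ of Theorem \ref{local}: on any compact subinterval of $[0,T^*)$ one re-runs local existence from the initial time $a$ with datum $\psi(a)\in\DD$ and invokes unconditional uniqueness to identify the resulting $W^{1,r}((a,b);L^{p+1})$ solution with the maximal one. Since all your intervals $[t_k,t]$ are compactly contained in $[0,T^*)$, this fills the hole without changing your argument, but as written the finiteness is tacitly assumed rather than proved.
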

\begin{proof}[{\bf Proof}]
From Cor. \ref{strongeq} we know that the solution $\psi$  is in $ C ( [0,T^*) , \DD) \cap C^1 ( [0,T^*), L^2 (\erre))$, here $T^*$ is the maximal time of existence of the solution. By rephrasing the  blow-up alternative,  we know that if $T^*<\infty$ it must be $\lim_{t\to T^*} \| \psi(t)\|_{\DD}=\infty$. \\
To prove that the solution is global we reason by absurd: we  show that  if $T^* <\infty$ it must hold $\lim_{t\to T^*} \| \psi(t)\|_{\DD}<\infty$; but this contradicts the blow-up alternative and so $T^* = \infty$. \\
Let us assume that $T^*<\infty$. The first observation is that conservation laws imply that 
\begin{equation}\label{apriori}
\|\psi\|_{L^\infty((0,T^*),L^2)} <\infty \qquad \text{and} \qquad  \|\psi\|_{L^\infty((0,T^*),\DD^{1/2})} <\infty . 
\end{equation}
Hence,  to  prove that $\lim_{t\to T^*} \| \psi(t)\|_{\DD}<\infty$ it is enough to show that $\|\HH \psi(t)\|$ does not blow-up in finite time. In particular, we are going to show that 
\[
\|\HH \psi\|_{L^\infty((0,T^*),L^2)}<\infty, 
\]
then, by continuity of $\|\HH \psi(t)\|$, this guarantees that $\lim_{t\to T^*} \| \HH \psi(t)\|<\infty$. \\
Since $\psi$ is a strong solution we have 
\[
\HH \psi(t) = i \partial_t \psi(t) -F(\psi(t)) .
\]
Hence,
\[
\|\HH \psi(t)\| \leq  \|\partial_t \psi(t) \| + \|\psi(t)\|_{2p}^p \qquad \forall t \in (0,T^*),
\]
and, 
\[
\|\HH \psi\|_{L^\infty((0,T^*),L^2)} \leq  \|\partial_t \psi\|_{L^\infty((0,T^*),L^2)} + \|\psi\|_{L^\infty((0,T^*),L^{2p})}^p. 
\]
By the Gagliardo-Nirenberg inequalities in Eq. \eqref{adaptedGN} we infer 
\[ 
 \|\psi(t)\|_{2p}\leq c \|\psi(t)\|_{\DD^{1/2}}  \qquad \forall t \in (0,T^*),
\]
hence, $ \|\psi\|_{L^\infty((0,T^*),L^{2p})} <\infty$  and we  are left to prove that $\partial_t\psi \in L^\infty((0,T^*),L^2)$. Preliminarily we prove that $\partial_t\psi \in L^r((0,T^*),L^{p+1})$. We start by noticing that from part $2)$ of  Th. \ref{local} it follows that
\[ \partial_t \psi \in L^r((0,\tau),L^{p+1}) \qquad \forall 0 < \tau < T^*.\]
Hence, by H\"older inequality in time,
\[ 
\| \partial_t\psi \|_{L^{r'}((0,\tau),L^{p+1})} \leq {T^*}^{1-\frac{2}{r}} \| \partial_t \psi \|_{L^r((0,\tau),L^{p+1})} <\infty ,
\]  
and 
\begin{equation}\label{billy}
\partial_t \psi \in L^{r'}((0,\tau),L^{p+1}) \qquad \forall 0 < \tau < T^*.
\end{equation}
Since, obviously, $\psi(t)$ satisfies also the weak form of the equation, we infer 
\begin{equation}\label{strangle}
\partial_t\psi(t) = -i U(t)\HH \psi_0  - i U(t) F(\psi_0) -i(\Gamma \partial_t F(\psi))(t).
\end{equation}
Next we apply Prop. \ref{propa}-$b)$ and \ref{propa}-$f)$ on $(0,\tau)$ to obtain 
\begin{equation*}
\|\partial_t\psi\|_{L^r((0,\tau),L^{p+1})}  \leq  C(\tau) \big(\|\HH \psi_0\|  + \|F(\psi_0)\| + \|\partial_t F(\psi)\|_{L^{r'}((0,\tau),L^{1+1/p})}\big). 
\end{equation*}
where  $C(\tau)$ is bounded by a constant that depends on $T^*$. Since $\psi_0\in \DD$, also $\|\HH \psi_0\|$ and $\|F(\psi_0)\| $ are bounded and can be absorbed in the constant. Hence, we have the bound
\begin{equation} \label{log2}
\|\partial_t\psi\|_{L^r((0,\tau),L^{p+1})}  \leq  C \big(1+ \|\partial_t F(\psi)\|_{L^{r'}((0,\tau),L^{1+1/p})}\big)
\end{equation}
where the constant $C$ depends on $T^*$ and $\psi_0$ but not on $\tau$. 

By the inequality already used several times before,
\[
\|  F(\psi)(t') - F(\psi)(t) \|_{1+1/p}  \leq C (  \|\psi(t')\|_{p+1}^{p-1} +  \|\psi(t)\|_{p+1}^{p-1})\, \| \psi(t') - \psi(t)\|_{p+1} ,
\]
we obtain
\[
\|  \partial_t  F(\psi)(t) \|_{1+1/p}  \leq C \|\psi(t)\|_{p+1}^{p-1}  \| \partial_t \psi(t)\|_{p+1} \qquad \forall t\in(0,T^*) .
\]
Hence, the bound \eqref{boundLp} and the a-priori bounds \eqref{apriori} give 
\begin{equation}\label{turn}
\|  \partial_t  F(\psi) \|_{L^{r'}((0,\tau),L^{1+1/p})}  \leq C \| \partial_t \psi\|_{L^{r'}((0,\tau),L^{p+1})},
\end{equation}
so that, by inequality \eqref{log2} we infer
\begin{equation}\label{log3}
\|\partial_t\psi\|_{L^r((0,\tau),L^{p+1})}  \leq  C \big(1+\| \partial_t \psi\|_{L^{r'}((0,\tau),L^{p+1})}\big). 
\end{equation}
Fix $0<\ve<\tau<T^*$, and notice that 
\[\begin{aligned}
\| \partial_t \psi\|_{L^{r'}((0,\tau),L^{p+1})} \leq & C \big( \| \partial_t \psi\|_{L^{r'}((0,\tau-\ve),L^{p+1})}+ \| \partial_t \psi\|_{L^{r'}((\tau-\ve,\tau),L^{p+1})} \big) \\
\leq & C \big( \| \partial_t \psi\|_{L^{r'}((0,T^*-\ve),L^{p+1})}+  \ve^{1-\frac2r} \| \partial_t \psi\|_{L^{r}((\tau-\ve,\tau),L^{p+1})} \big)  \\
\leq & C \big(   \| \partial_t \psi\|_{L^{r'}((0,T^*-\ve),L^{p+1})}+ \ve^{1-\frac2r} \| \partial_t \psi\|_{L^{r}((0,\tau),L^{p+1})} \big)  
\end{aligned}\]
Inserting the latter bound in Eq. \eqref{log3}, we obtain the inequality 
\[
\|\partial_t\psi\|_{L^r((0,\tau),L^{p+1})}  \leq  C\big(1+ \| \partial_t \psi\|_{L^{r'}((0,T^*-\ve),L^{p+1})}+ \ve^{1-\frac2r} \| \partial_t \psi\|_{L^{r}((0,\tau),L^{p+1})} \big)   . 
\]
For $\ve$ small enough  the latter term at the r.h.s. can be absorbed in the l.h.s.  to obtain  
\[
\|\partial_t\psi\|_{L^r((0,\tau),L^{p+1})}  \leq  C \big(1+ \| \partial_t \psi\|_{L^{r'}((0,T^*-\ve),L^{p+1})}\big) . 
\]
Since $\partial_t \psi \in {L^{r'}((0,T^*-\ve),L^{p+1})}$ by Eq. \eqref{billy}, and $C$ does not depend on $\tau$, taking the limit  $\tau \to T^*$ we obtain the desired claim $\partial_t\psi \in L^r((0,T^*),L^{p+1})$.

To conclude we go back to Eq. \eqref{strangle} and use Prop. \ref{propa}-$a)$ and $e)$ to obtain 
\[
\|\partial_t\psi\|_{L^\infty((0,T^*),L^2)}  \leq  C \big(\|\HH \psi_0\|  + \|F(\psi_0)\| + \|\partial_t F(\psi)\|_{L^{r'}((0,T^*),L^{1+1/p})}\big) 
\]
As before, see Eq. \eqref{turn},  the bound \eqref{boundLp} and the a-priori bounds \eqref{apriori} give 
\[
\|  \partial_t  F(\psi)\|_{L^{r'}((0,T^*),L^{1+1/p})}  \leq C  \| \partial_t \psi\|_{L^{r'}((0,T^*),L^{p+1})} \leq 
C {T^*}^{1-\frac2r}\| \partial_t \psi\|_{L^{r}((0,T^*),L^{p+1})}, 
\]
hence, $ \partial_t  F(\psi) \in {L^{r'}((0,T^*),L^{1+1/p})}$,  which in turn implies $\partial_t\psi \in {L^\infty((0,T^*),L^2)} $ and concludes the proof.
\end{proof}

\subsection*{Acknowledgments.}
D. Noja acknowledges for funding the EC grant IPaDEGAN (MSCA-RISE-778010). The authors acknowledge the support of the Gruppo Nazionale di Fisica Matematica (GNFM-INdAM).

\end{document}